\documentclass{TCAM}

\usepackage[english]{babel}     

\usepackage[utf8]{inputenc}   

\usepackage{epsfig}
\usepackage{multirow}
\usepackage{array}
\usepackage{url}
\usepackage{graphics}
\usepackage{subfigure}
\usepackage{psfrag}
\usepackage{listingsutf8}  
\lstdefinestyle{mystyle}{
    basicstyle=\ttfamily\footnotesize,
    breakatwhitespace=false,         
    breaklines=true,                 
    captionpos=b,                    
    keepspaces=true,                 
    showspaces=false,                
    showstringspaces=false,
    showtabs=false,                  
    tabsize=2
}

\usepackage[hypertexnames=false,colorlinks=true,breaklinks=true,bookmarks=true,urlcolor=blue,citecolor=blue,linkcolor=blue,bookmarksopen=false,draft=false]{hyperref}
\usepackage{float}
\usepackage{amsmath, amsfonts, amsthm, amssymb}
\usepackage{placeins}
\usepackage{pdfpages}
\usepackage[ruled,vlined,linesnumbered,norelsize]{algorithm2e}
\usepackage{comment}
\usepackage{placeins}
\usepackage{subfigure}
\usepackage{nicematrix} 
\usepackage{tikz}
\usetikzlibrary{fit}
\usepackage{epsfig,graphicx}
\usepackage{multicol,pst-plot}
\usepackage{pstricks}
\usepackage{amsmath}
\usepackage{amsfonts}
\usepackage{amssymb}
\usepackage{eucal}
\usepackage[left=2cm,right=2cm,top=2cm,bottom=2cm]{geometry}
\usepackage{lipsum}
\usepackage{listings}
\usepackage{color}

\theoremstyle{plain}
\newtheorem{theorem}{Theorem}

\newtheorem{corollary}[theorem]{Corollary}
\newtheorem{proposition}[theorem]{Proposition}

\DeclareMathOperator*{\argmin}{argmin}
\DeclareMathOperator{\prox}{\textstyle{prox}}
\DeclareMathOperator{\vvec}{\textstyle{vec}}
\DeclareMathOperator{\rank}{\textstyle{rank}}

\makeatletter
\renewcommand*{\top}{%
  {\mathpalette\@transpose{}}%
}
\newcommand*{\@transpose}[2]{%
  \raisebox{\depth}{$\m@th#1\scriptscriptstyle\mathsf{T}$}%
}
\makeatother

\usepackage[symbol*]{footmisc}
\setfnsymbol{chicago}

\title{Sparse symmetric generalized inverses\\ for sparse symmetric matrices\footnote{A short preliminary version of this work appeared at ERMAC 2026.}}

\runningtitle{Sparse symmetric generalized inverses for sparse symmetric matrices}

\author{ 
Ananias Sousa Machado\aff{
Universidade Federal do Rio de Janeiro, Rio de Janeiro, RJ, Brazil, e-mail: ananiasmachado@poli.ufrj.br
} , 
 Marcia Fampa\aff{Universidade Federal do Rio de Janeiro, Rio de Janeiro, RJ, Brazil, e-mail: fampa@cos.ufrj.br 
} , 
Jon Lee\aff{
University of Michigan,
Ann Arbor, MI, USA,
e-mail: jonxlee@umich.edu 
 }
 }
 
\abstracttcam{
Generalized inverses play a fundamental role in numerical linear algebra and statistical estimation, particularly when matrices are rectangular, singular, or rank deficient. Even when the underlying matrix is sparse, classical generalized inverses such as the Moore-Penrose pseudoinverse are typically dense, leading to high storage requirements, expensive matrix-vector multiplications, and reduced computational efficiency. Motivated by these limitations, we investigate the problem of computing sparse symmetric generalized inverses for sparse symmetric matrices, extending previous work on sparse generalized inverses for rectangular rank-deficient matrices.
We consider the problem of minimizing the vector 1-norm over the set of symmetric generalized inverses, using vector 1-norm minimization as a tractable surrogate for sparsity promotion. Our first contribution is a new equivalent characterization of symmetric generalized inverses of symmetric matrices, which yields a compact affine reformulation of the problem. Building on these formulations, we develop a Douglas-Rachford splitting (DRS) algorithm equipped with a closed-form projection onto the feasible affine space. The proposed method generates feasible sparse generalized inverses at every iteration and scales efficiently to large sparse instances.
Computational experiments compare the proposed DRS approach with exact linear-optimization formulations solved by a commercial optimizer, as well as with local-search heuristics from the literature. The results demonstrate that the proposed formulation and algorithm produce generalized inverses that are substantially sparser than the Moore-Penrose pseudoinverse while maintaining significantly smaller 1-norms than competing sparse approaches. Moreover, the DRS algorithm exhibits superior scalability relative to exact linear-optimization formulations, successfully solving instances far beyond the reach of commercial solvers.
As an application, we investigate the computation of least-squares solutions for problems involving many right-hand-side vectors and sparse rank-deficient design matrices. We establish theoretical connections between sparse generalized inverses of $A$ and sparse symmetric generalized inverses of $A^\top A$, leading to alternative closed-form least-squares solvers. Numerical experiments on both randomly generated sparse matrices and real benchmark instances from the SuiteSparse Matrix Collection arising from power-system applications illustrate the trade-offs among sparsity, numerical robustness, and computational cost. The results indicate that sparse symmetric generalized inverses can provide computationally efficient and numerically attractive alternatives for repeated least-squares computations in large-scale settings.
}

\keywords{Moore–Penrose properties, generalized inverse, minimum 1-norm, sparsity, linear optimization, Douglas–Rachford splitting algorithm, least squares}

\acknowledgments{}

\acknowledgments{A. Machado was supported in part by CNPq grant 131219/2026-0.
M. Fampa was supported in part by CNPq grant 307167/2022-4.
J. Lee was supported in part by AFOSR grant FA9550-22-1-0172.} 

\counterwithout{equation}{section}

\begin{document}
	
\maketitle


\section{Introduction}\label{sec:introduction}

When a real matrix $A \in \mathbb{R}^{m \times n}$ is not square or square but not invertible, generalized inverses of $A$ are useful in several matrix algebra problems (see \cite{Rao1972GeneralizedIO}). In particular, the well-known Moore–Penrose (M-P) pseudoinverse 
can be used to compute solutions to several key problems in statistics, such as the least-squares solution of an overdetermined system of linear equations and the minimum 2-norm solution of an underdetermined system of linear equations.

In general, for numerical stability and computational efficiency, it is often desirable to compute generalized inverses that optimize, or approximately optimize, certain structural properties, such as minimum vector 1-norm, minimum rank, or maximum sparsity (the latter being NP-hard; see \cite[Proposition~1.3]{XFLPsiam}). Such generalized inverses will, in general, differ from the M-P pseudoinverse. In the setting considered in this work, we assume that the input matrix is symmetric and seek a symmetric generalized inverse with minimum 1-norm, using 1-norm minimization as a tractable mechanism for promoting sparsity while maintaining favorable numerical properties.

If $A=U\Sigma V^\top $ is the real singular value decomposition of $A$ (see \cite{golub13}, for example), then the M–P pseudoinverse of $A$ can be defined as $A^\dagger := V \Sigma^\dagger U^\top $, where $\Sigma^\dagger $ has the form of the transpose of the diagonal matrix $\Sigma$, and is derived from $\Sigma$ by taking the reciprocals of the non-zero (diagonal) elements of $\Sigma$ (i.e., the non-zero singular values of $A$).
We can define different tractable sparse generalized  inverses, based on the following fundamental characterization of the
M–P pseudoinverse.

\bigskip 

\begin{theorem}[\cite{Penrose}]
For $A\in\mathbb{R}^{m \times n}$, the M–P pseudoinverse $A^{\dagger}$ is the unique matrix 
$H\in\mathbb{R}^{n \times m}$ satisfying:
\begin{align}
& AHA = A \label{property1}\tag{P1}\\
& HAH = H \label{property2}\tag{P2}\\
& (AH)^{\top} = AH \label{property3}\tag{P3}\\
& (HA)^{\top} = HA. \label{property4}\tag{P4}
\end{align}
\end{theorem}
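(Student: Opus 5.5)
The proof has two parts: existence, by checking that $A^\dagger := V\Sigma^\dagger U^\top$ satisfies \eqref{property1}--\eqref{property4}, and uniqueness, by showing that any two matrices satisfying all four properties coincide.

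For existence, I substitute $A=U\Sigma V^\top$ and $H=V\Sigma^\dagger U^\top$. Since $U$ and $V$ are orthogonal, the orthogonal factors cancel in each product. We get $AH = U(\Sigma\Sigma^\dagger)U^\top$ and $HA = V(\Sigma^\dagger\Sigma)V^\top$. Here $\Sigma\Sigma^\dagger\in\mathbb{R}^{m\times m}$ and $\Sigma^\dagger\Sigma\in\mathbb{R}^{n\times n}$ are diagonal 0/1 matrices, with a $1$ exactly at the positions of the nonzero singular values. This immediately gives \eqref{property3} and \eqref{property4}, since $U D U^\top$ is symmetric for diagonal $D$. Next, $AHA = U(\Sigma\Sigma^\dagger\Sigma)V^\top$ and $HAH = V(\Sigma^\dagger\Sigma\Sigma^\dagger)U^\top$. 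The identities $\Sigma\Sigma^\dagger\Sigma=\Sigma$ and $\Sigma^\dagger\Sigma\Sigma^\dagger=\Sigma^\dagger$ hold entrywise, because for each singular value $\sigma_i>0$ we have $\sigma_i\sigma_i^{-1}\sigma_i=\sigma_i$, and zero entries remain zero. This gives \eqref{property1} and \eqref{property2}.

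For uniqueness, suppose $H$ and $G$ both satisfy \eqref{property1}--\eqref{property4}. The idea is to use the identities to move between $H$ and $G$ through a chain of equalities. First, using \eqref{property2}, \eqref{property3} and then \eqref{property1} for $G$,
\[
H = HAH = H(AH)^\top = HH^\top A^\top = HH^\top (AGA)^\top = HH^\top A^\top G^\top A^\top = H(AH)^\top (AG)^\top = HAHAG = HAG.
\]
Symmetrically, using \eqref{property4} in place of \eqref{property3},
\[
G = GAG = (GA)^\top G = A^\top G^\top G = (AHA)^\top G^\top G = A^\top H^\top A^\top G^\top G = (HA)^\top (GA)^\top G = HAGAG = HAG.
\]
Therefore $H = HAG = G$.

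The main obstacle is arranging the uniqueness chain so that each transpose is absorbed correctly by the symmetry properties \eqref{property3} and \eqref{property4}. Once the common middle expression $HAG$ is identified, the rest is routine verification.
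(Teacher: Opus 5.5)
Your proof is correct and complete. The SVD computation gives existence, and your two chains are Penrose's classical uniqueness argument. Every transpose is absorbed legitimately: the first chain uses P1 for $G$, the second uses P1 for $H$, and both use P2--P4 for both matrices.

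The paper itself gives no proof and only cites Penrose. However, the remark after Theorem~\ref{thm:structural} points to a different route. Write $H=V\Gamma U^\top$ with blocks $X,Y,Z,W$. Conditions P1, P3 and P4 force $X=D^{-1}$, $Y=0$ and $Z=0$, and then P2 forces $W=ZDY=0$. Conversely, this $\Gamma=\Sigma^\dagger$ satisfies all four conditions, so existence and uniqueness come out together.

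That route is shorter, but it depends on the structural theorem, which the paper also only cites, and it works in a fixed SVD basis. Your uniqueness chain is purely algebraic and self-contained; only your existence step uses the SVD.

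One consequence of your argument is worth stating explicitly. Every choice of SVD produces a matrix $V\Sigma^\dagger U^\top$ satisfying P1--P4. Uniqueness therefore shows that the paper's SVD-based definition of $A^\dagger$ does not depend on which SVD is chosen.
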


\noindent A \emph{generalized inverse} of $A$ is simply any matrix $H$ that satisfies \ref{property1}. 
If a generalized inverse satisfies additionally \ref{property2} (resp., \ref{property3}, \ref{property4}) we call it \emph{reflexive} (resp., \emph{ah-symmetric}, \emph{ha-symmetric}). A generalized inverse $H$ of $A$ always has rank at least the rank of $A$, and with equality if and only if 
$H$ is reflexive.

We now state a key structural result essential to our approach.

\begin{theorem}[\cite{BenIsrael1974}, p. 208, Example 14 (without~proof); see also  \cite{PFLX_ORL} (with proof)]\label{thm:structural}
For a matrix $A\in\mathbb{R}^{m\times n}$ of rank $r$, consider the complete singular value decomposition  $A =: U \Sigma V^\top$ with
\[
\Sigma =: \begin{bmatrix}\underset{\scriptscriptstyle r\times r}{D} & \underset{\scriptscriptstyle r\times (n-r)}{0}\\[1.5ex]
\underset{\scriptscriptstyle (m-r)\times r}{0} & \underset{\scriptscriptstyle (m-r)\times (n-r)}{0}\end{bmatrix},
\]

\noindent and $D$ diagonal.
For $H \in \mathbb{R}^{n \times m}$, let $\Gamma:=V^\top H U$ (hence $H= V\Gamma U^\top$), where we partition $\Gamma$ into blocks as in:
\[
\Gamma=:\begin{bmatrix}\underset{\scriptscriptstyle r\times r}{X} & \underset{\scriptscriptstyle r\times (m-r)}{Y}\\[1.5ex]
\underset{\scriptscriptstyle (n-r)\times r}{Z} & \underset{\scriptscriptstyle (n-r)\times (m-r)}{W}\end{bmatrix}.
\]
Then:
\begin{enumerate}
\item[(i)] {\rm\ref{property1}} is equivalent to $X = D^{-1}$.
\item[(ii)] If {\rm\ref{property1}} holds, then {\rm\ref{property2}} is equivalent to $ZDY = W$.
\item[(iii)] If {\rm\ref{property1}} holds, then {\rm\ref{property3}} is equivalent to $Y = 0$.
\item[(iv)] If {\rm\ref{property1}} holds, then {\rm\ref{property4}} is equivalent to $Z = 0$.
\end{enumerate}
\end{theorem}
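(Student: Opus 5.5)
The plan is to conjugate everything by the orthogonal matrices $U$ and $V$, which turns each of \ref{property1}--\ref{property4} into a block-matrix identity in $\Sigma$ and $\Gamma$. Since $A=U\Sigma V^\top$ and $H=V\Gamma U^\top$ with $U,V$ orthogonal, we have
\[
AH = U\Sigma\Gamma U^\top,\qquad HA = V\Gamma\Sigma V^\top,\qquad AHA = U\Sigma\Gamma\Sigma V^\top,\qquad HAH = V\Gamma\Sigma\Gamma U^\top .
\]
Multiplying on the left and right by the appropriate orthogonal factors, which is an invertible operation, gives four equivalences:
\begin{itemize}
\item \ref{property1} holds if and only if $\Sigma\Gamma\Sigma=\Sigma$;
\item \ref{property2} holds if and only if $\Gamma\Sigma\Gamma=\Gamma$;
\item \ref{property3} holds if and only if $\Sigma\Gamma$ is symmetric, because $(U M U^\top)^\top = U M^\top U^\top$;
\item \ref{property4} holds if and only if $\Gamma\Sigma$ is symmetric.
\end{itemize}

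Next I compute the block products using the partition of $\Sigma$, with only its $(1,1)$ block $D$ nonzero, and the partition of $\Gamma$:
\[
\Sigma\Gamma=\begin{bmatrix} DX & DY\\ 0&0\end{bmatrix},\quad
\Gamma\Sigma=\begin{bmatrix} XD & 0\\ ZD&0\end{bmatrix},\quad
\Sigma\Gamma\Sigma=\begin{bmatrix} DXD & 0\\ 0&0\end{bmatrix}.
\]
For (i), $\Sigma\Gamma\Sigma=\Sigma$ reduces to $DXD=D$. Since $D$ is invertible (its diagonal holds the $r$ positive singular values), this is equivalent to $X=D^{-1}$.

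For (ii), assume $X=D^{-1}$. Then
\[
\Gamma\Sigma\Gamma=\begin{bmatrix} XD & 0\\ ZD&0\end{bmatrix}\Gamma
=\begin{bmatrix} I & 0\\ ZD&0\end{bmatrix}\begin{bmatrix} X&Y\\ Z&W\end{bmatrix}
=\begin{bmatrix} X & Y\\ ZDX & ZDY\end{bmatrix}.
\]
Here $ZDX=ZDD^{-1}=Z$, so comparing with $\Gamma$ block by block leaves exactly $ZDY=W$.

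For (iii), under $X=D^{-1}$ we get $\Sigma\Gamma=\begin{bmatrix} I & DY\\ 0&0\end{bmatrix}$. This matrix is symmetric if and only if $DY=0$, that is, $Y=0$, again using invertibility of $D$. Part (iv) follows in the same way: $\Gamma\Sigma=\begin{bmatrix} I&0\\ ZD&0\end{bmatrix}$ is symmetric if and only if $ZD=0$, that is, $Z=0$.

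There is no real obstacle here. The only care points are to keep the block dimensions consistent ($\Sigma$ is $m\times n$, $\Gamma$ is $n\times m$), to handle degenerate cases where $r=0$ or $r$ equals $m$ or $n$ (the corresponding blocks are simply empty), and to invoke invertibility of $D$ at each cancellation.
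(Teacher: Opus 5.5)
Your proof is correct and is the standard argument. Conjugating by the orthogonal factors reduces \ref{property1}--\ref{property4} to $\Sigma\Gamma\Sigma=\Sigma$, $\Gamma\Sigma\Gamma=\Gamma$, and symmetry of $\Sigma\Gamma$ and $\Gamma\Sigma$; the block computations, using that $D$ is invertible, then give (i)--(iv). The paper does not prove this result itself, deferring to the cited references, but its proof of Theorem~\ref{thm:p1sym_characteristic} uses the same conjugate-and-compare-blocks technique, so your route matches.
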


In fact, setting $W$, $Y$, and $Z$ to null matrices in Theorem \ref{thm:structural}, we can see the simple relation between $A^\dagger$ and the SVD decomposition of $A$.

\medskip

\noindent{\bf  Literature overview.}
The idea of minimizing the vector 1-norm of matrix inverses and generalized inverses as a tractable approach for promoting sparsity has been investigated in several contexts in the recent literature. 
\cite{dokmanic,dokmanic1,dokmanic2} proposed the use of 1-norm minimization for inducing sparsity in left and right inverses of matrices. 
Building on these ideas, \cite{FFL2016} introduced a systematic framework for computing sparse generalized inverses satisfying different subsets of the M-P properties, formulating the associated problems as linear-optimization models based on vector 1-norm minimization.
Subsequent works investigated both theoretical and algorithmic aspects of sparse generalized inverses. 
\cite{FampaLee2018ORL} developed a combinatorial polynomial-time approximation algorithm for 1-norm minimization over reflexive generalized inverses. 
The work of \cite{XFLPsiam} extended the study to (i) symmetric generalized inverses of symmetric matrices and (ii) ah-symmetric generalized inverses of general matrices. That work also established hardness results for sparsity maximization problems associated with generalized inverses and derived sparsity bounds for solutions obtained through linear-optimization formulations. Related developments can be found in \cite{XFLrank12,FLPXjogo}.
Optimization formulations for sparse generalized inverses were further investigated in \cite{PFLX_ORL}, where structural properties of generalized inverses were exploited to derive compact formulations for both vector 1-norm minimization, aimed at sparse generalized inverses, and (2,1)-norm minimization, aimed at row-sparse generalized inverses. More recently, \cite{ponte2024goodfastrowsparseahsymmetric} developed  ADMM (Alternating Direction Method of Multipliers) algorithms for computing ah-symmetric reflexive generalized inverses using both 1-norm and (2,1)-norm minimization. That work also showed that the approximation algorithm of \cite{FampaLee2018ORL} provides approximation guarantees for the (2,1)-norm setting.
The connection between sparse generalized inverses and least-squares computation was recently explored in \cite{machado2025}, which investigated sparse \emph{universal solvers} for fundamental problems in statistics. That work studied sparse generalized inverses capable of simultaneously solving least-squares and minimum-norm problems, including minimum-rank universal least-squares solvers, and proposed several alternative linear-optimization formulations together with efficient proximal-point algorithms. In particular, the paper demonstrated the strong practical performance of Douglas-Rachford splitting (DRS) methods for computing sparse universal solvers.

The present paper continues this line of research by focusing on sparse symmetric generalized inverses of sparse symmetric matrices. In contrast to previous approaches based primarily on linear optimization or local-search techniques for computing sparse symmetric generalized inverses, motivated by the successful results in \cite{machado2025}, we develop  an efficient DRS algorithm with closed-form projections for this problem. We further investigate the application of these sparse generalized inverses to repeated least-squares computations involving sparse rank-deficient matrices.

A preliminary version of the present work 
was presented at ERMAC 2026. 
That work investigated sparse symmetric generalized inverses of symmetric matrices through 1-norm minimization and proposed a Douglas-Rachford splitting (DRS) framework for solving the associated optimization problem. The present paper substantially extends those preliminary results in several directions. In particular, we present a more extensive computational study, including experiments on significantly larger instances and detailed comparisons with local-search procedures from the literature. 
Moreover, we establish new applications of sparse symmetric generalized inverses to a generalized version of the Tikhonov regularization problem, as well as to sparse least-squares computations involving rank-deficient matrices. Finally, we investigate the practical performance of the proposed approaches through a case study based on real-world data, providing additional insight into the trade-offs between sparsity, numerical robustness, and computational efficiency in least-squares applications.

\medskip

\noindent{\bf Organization and Contributions.} 
This paper develops new theoretical and algorithmic tools for computing sparse symmetric generalized inverses of sparse symmetric matrices and investigates their application to least-squares problems.

Our first contribution is presented in Section~\ref{sec:sparse}, where we study the problem of computing a symmetric generalized inverse with minimum vector 1-norm. While the original formulation is naturally expressed through the M-P condition \ref{property1} together with symmetry constraints, we derive a new equivalent characterization for symmetric generalized inverses of symmetric matrices. Specifically, we prove that the conditions \ref{property1} and $H^\top =H$ are equivalent to a single matrix equation involving both $A$ and $H$. This reformulation leads to a more compact affine description of the feasible set.

In Section~\ref{sec:DRS}, we exploit our formulation to derive a new DRS  algorithm for sparse symmetric generalized inverses. A key contribution of this section is the derivation of an explicit analytical formula for the projection onto the affine set of symmetric generalized inverses. This enables each DRS iteration to be computed efficiently without solving auxiliary optimization subproblems. We also discuss initialization strategies, stopping criteria, and practical implementation details. The proposed method generates feasible generalized inverses throughout the iterative process and is specifically designed to scale to large sparse matrices.

Section~\ref{sec:numexp} contains a detailed computational study comparing three approaches:
(i) linear programming formulations solved with Gurobi,
(ii) the proposed DRS algorithm, and
(iii) previously developed local-search approaches from the literature.

The experiments evaluate sparsity, 1-norm, rank behavior, and computational time on randomly generated sparse low-rank matrices of dimensions up to $3500\times3500$. The results demonstrate several important phenomena:
\begin{itemize}
    \item minimizing the 1-norm effectively induces sparsity in generalized inverses;
    \item we can find  generalized inverses that are significantly sparser than the M-P pseudoinverse;
    \item there is a clear trade-off between sparsity and the minimum-rank property; 
    \item the proposed DRS method scales substantially better than exact linear-optimization formulations.
\end{itemize}

A second major contribution of the paper is developed in Section~\ref{sec:LS}, where we apply sparse generalized inverses to least-squares estimation problems. We establish theoretical links between:
\begin{itemize}
    \item generalized inverses of $A^\top A$, and
   ah-symmetric reflexive generalized inverses of $A$.
\end{itemize}

These results lead to a closed-form solution for generalized Tikhonov regularization problems and alternative closed-form solutions for least-squares problems. We then investigate two competing strategies for repeated least-squares computations:
\[
\hat{\theta} = \hat{H}A^\top b
\quad\text{and}\quad
\hat{\theta} = Hb,
\]
where $\hat{H}$ and $H$ are sparse generalized inverses obtained through different optimization models.

We further employ an efficient reformulation for computing sparse ah-symmetric reflexive generalized inverses and compare DRS-based methods with local-search procedures from the literature. In Section \ref{sec:exp-lse}, extensive numerical experiments on randomly generated sparse matrices evaluate the resulting least-squares solvers in terms of arithmetic complexity, sparsity, numerical scaling, and computational efficiency.

Finally, in Section~\ref{sec:case}, we present a case study using real sparse rank-deficient least-squares instances from the SuiteSparse Matrix Collection arising from applications at the New York Power Authority (NYPA). These experiments provide insight into the practical performance of the proposed methods on challenging real-world matrices and highlight the trade-offs between scalability, sparsity, and numerical robustness in practical least-squares computations.

\medskip

\noindent{\bf Notation.}
We denote the $i$-th standard unit vector by $e_i$ and the identity matrix of order $n$ by $I_n$\,. For $p\in\{0\}\cup [1,\infty]$, we denote the ordinary vector norm $p$ by $\|\cdot\|_p$ (it is evident that this is only a pseudonorm for $p=0$). The vectorization of a matrix $A\in\mathbb{R}^{m\times n}$ is denoted by $\vvec{(A)}\in\mathbb{R}^{mn}$, and its vector norm $p$ is defined by $\|A\|_p := \|\vvec(A)\|_p$\,. For matrices, $\rank(\cdot)$ denotes rank,  $\|\cdot\|_F$ denotes Frobenius norm, $\mathcal{R}(\cdot)$ denotes the range (i.e., column space),
and $\mathcal{K}(\cdot)$ denotes the kernel (i.e., null space). We denote the Kronecker product by $\otimes$ (using the identity: $\vvec(ABC)=(C^\top \otimes A)\vvec(B)$). We denote by $\argmin\{\cdot\}$ any solution to the associated minimization problem.

\section{Sparse symmetric generalized inverse of a symmetric matrix}\label{sec:sparse}

We consider the case where $A\in\mathbb{R}^{n\times n}$ is symmetric, and we seek a sparse symmetric generalized inverse of $A$. We observe that if $A$ is symmetric, then the M-P pseudoinverse of $A$ is symmetric; therefore, we know that symmetric generalized inverses of $A$ exist. We are interested in the following problem, which seeks to induce sparsity through the minimization of the vector norm 1.

\begin{align}
& \min_{H \in \mathbb{R}^{n \times n}}\{\|H\|_1 : {\rm \ref{property1}},\, H^\top \!= H\} \tag{$P_{1,\rm{Sym}}^1$}.\label{prob:p1_sym}
\end{align}

An alternative would be to simply seek a generalized inverse that minimizes the 1-norm  and then symmetrize it, which would necessarily still minimize the 1-norm.
However, with symmetrization, the rank
could increase substantially and
the sparsity could decrease substantially.

We can arrive at a subtle and useful reformulation of \ref{prob:p1_sym}
with the following characterization.

\begin{theorem}\label{thm:p1sym_characteristic}
Let $A\in\mathbb{R}^{n\times n}$ be symmetric. Then $H$ satisfies {\rm\ref{property1}} and $H^\top = H$ if and only if $H$ satisfies $AHA + H = A + H^\top$.
\end{theorem}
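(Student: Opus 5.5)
The forward direction is immediate. If $AHA=A$ and $H^\top=H$, then adding these two identities gives $AHA+H=A+H^\top$. So all the work is in the converse. There, the plan is to split the single equation $AHA + H = A + H^\top$ into its symmetric and skew-symmetric parts, using that $A$ is symmetric.

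Assume $AHA + H = A + H^\top$, and write $E := AHA - A$. Then the hypothesis reads $E = H^\top - H$.

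\emph{Step 1: a transposition identity.} The right-hand side $H^\top - H$ is skew-symmetric. Since $A^\top=A$, we have $E^\top = AH^\top A - A$. Hence
\[
E^\top - E = A(H^\top - H)A = AEA .
\]
This identity uses only the symmetry of $A$.

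\emph{Step 2: $E$ is both symmetric and skew-symmetric.} Because $E = H^\top - H$ is skew, $E^\top = -E$. Substituting into Step 1 gives $-2E = AEA$, that is,
\[
AEA = -2E, \quad \text{where } E = AHA - A .
\]

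\emph{Step 3: show $E=0$.} This is where I expect the main obstacle, and I would handle it with the spectral decomposition $A = Q\Lambda Q^\top$. Set $\tilde E := Q^\top E Q$. Then $\tilde E$ is still skew-symmetric and satisfies $\Lambda \tilde E \Lambda = -2\tilde E$, which entrywise reads $(\lambda_i\lambda_j + 2)\tilde E_{ij} = 0$. If $\lambda_i\lambda_j \neq -2$ for every pair, this forces $\tilde E = 0$.

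In general, however, the product $\lambda_i\lambda_j$ could equal $-2$, so a further ingredient is needed. The extra fact to exploit is that $E = AHA - A$ lies in the range of $A$ on both sides. In the eigenbasis this means $\tilde E_{ij}=0$ whenever $\lambda_i=0$ or $\lambda_j=0$. I would combine this with the additional structure from $E = A(HA - I)$ and the skew-symmetry of $\tilde E$.

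If that combination fails to close the argument, I would test whether the statement can fail at all by looking for a small counterexample. Take $A=\mathrm{diag}(1,-2)$ and a skew $\tilde E$, and try to solve $AHA - A = H^\top - H$ with $H$ non-symmetric. If this produces a solution, it would show the claimed equivalence needs an extra hypothesis on $A$, for instance excluding eigenvalue pairs with $\lambda_i\lambda_j=-2$. The authors presumably have an argument that avoids this issue, or an implicit assumption such as $A$ being positive semidefinite.

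\emph{Step 4: conclude.} Once $E=0$ is established, we get $AHA=A$, which is \ref{property1}. Then $H^\top - H = E = 0$, so $H$ is symmetric, which completes the converse.
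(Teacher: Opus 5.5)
The obstacle you isolate in Step 3 is real, and no further ingredient can remove it: the statement is false for indefinite symmetric $A$. Your proposed test succeeds. With $A=\mathrm{diag}(1,-2)$,
\[
H=\begin{bmatrix}1 & 1\\ -1 & -\tfrac12\end{bmatrix},
\qquad
AHA=\begin{bmatrix}1 & -2\\ 2 & -2\end{bmatrix},
\]
one gets $AHA+H=\begin{bmatrix}2 & -1\\ 1 & -\tfrac52\end{bmatrix}=A+H^\top$. Yet $H\neq H^\top$ and $AHA\neq A$.

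Your reduction is in fact sharp. Suppose two eigenvalues in distinct positions satisfy $\lambda_i\lambda_j=-2$. Then $E:=Q(e_ie_j^\top-e_je_i^\top)Q^\top$ is nonzero, skew-symmetric, and satisfies $AEA=-2E$. Setting $H:=A^\dagger-\tfrac12E$ gives $AHA+H=A+A^\dagger+\tfrac12E=A+H^\top$, with $H$ not symmetric.

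The range observation you planned to use adds nothing. It says $\tilde E_{ij}=0$ when $\lambda_i\lambda_j=0$, which $(\lambda_i\lambda_j+2)\tilde E_{ij}=0$ already forces. The offending entries have both eigenvalues nonzero.

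The paper does not have an argument that avoids this; it assumes the problem away. Its proof writes $A=V\Sigma V^\top$ as a singular value decomposition and arrives at the same factor, $(d_id_j+2)x_{ij}=0$. It then concludes $x_{ij}=0$ because $d_i,d_j>0$. A decomposition $A=V\Sigma V^\top$ with nonnegative diagonal $\Sigma$ exists only when $A\succeq 0$. So what is actually proved is the positive semidefinite case, which covers every use in the paper ($B^\top B$ and $A^\top A$). With that hypothesis added (or, more generally, assuming no two eigenvalues of $A$ have product $-2$), your Steps 1--4 are a complete and correct proof. You should state the hypothesis explicitly rather than leave the case open in the hope that the authors avoid it.

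Your route also differs from the paper's and is somewhat cleaner. The paper partitions $\Gamma=V^\top HV$ into blocks and reads $Y=Z^\top$ and $W=W^\top$ off the off-diagonal blocks. It then solves $DXD+X=D+X^\top$ entrywise to get $X=D^{-1}$, and appeals to the structural theorem to recover $AHA=A$. Your identity $AEA=-2E$, for $E:=AHA-A=H^\top-H$, is coordinate-free and needs a diagonalization only at the last step. It yields $AHA=A$ and $H=H^\top$ simultaneously from $E=0$, without the structural theorem. Both arguments hinge on the same factor $\lambda_i\lambda_j+2$. Yours makes transparent that this is the only place a hypothesis on $A$ enters, and exactly what that hypothesis must be.
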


\begin{proof}
Let $A = V\Sigma V^\top$ be the full singular value decomposition of $A$. Consider $H = V\Gamma V^\top$ using the notation of Theorem \ref{thm:structural}.

$(\Rightarrow)$ It is easy to see that if $H$ satisfies \ref{property1} and $H^\top = H$, then $H$ satisfies $AHA + H = A + H^\top$.

$(\Leftarrow)$ To see that $H$ satisfies \ref{property1} and $H^\top = H$ if $H$ satisfies $AHA + H = A + H^\top$, observe that
    \begin{equation*}
        \begin{aligned}
            AHA + H & = A + H^\top & \Leftrightarrow \\
            V\Sigma V^\top V\Gamma V^\top V\Sigma V^\top + V\Gamma V^\top & = V\Sigma V^\top + V\Gamma^\top V^\top & \Leftrightarrow \\
            \Sigma \Gamma \Sigma + \Gamma & = \Sigma + \Gamma^\top & \Leftrightarrow \\
            \begin{bmatrix}
                D & 0 \\ 0 & 0
            \end{bmatrix}\begin{bmatrix}
                X & Y \\ Z & W
            \end{bmatrix}\begin{bmatrix}
                D & 0 \\ 0 & 0
            \end{bmatrix} + \begin{bmatrix}
                X & Y \\ Z & W
            \end{bmatrix} & = \begin{bmatrix}
                D & 0 \\ 0 & 0
            \end{bmatrix} + \begin{bmatrix}
                X^\top & Z^\top \\ Y^\top & W^\top
            \end{bmatrix} & \Leftrightarrow \\
            \begin{bmatrix}
                DXD + X & Y \\ Z & W
            \end{bmatrix} & = \begin{bmatrix}
                D + X^\top  & Z^\top  \\ Y^\top & W^\top
            \end{bmatrix}.
        \end{aligned}
    \end{equation*}
    Therefore, $Z = Y^\top$, $W = W^\top$, and $DXD = D + X^\top$. From $DXD = D + X^\top$, we have
    \begin{equation*}
        \begin{cases}
            \begin{aligned}
                d_i^2 x_{ii} + x_{ii} &= d_i + x_{ii}\,, & \text{if } i = j; \\
                d_i d_j x_{ij} + x_{ij} &= x_{ji}\,, & \text{if } i \neq j.
            \end{aligned}
        \end{cases}
    \end{equation*}
    Then, we have that   $d_i^2x_{ii} + x_{ii} = d_i + x_{ii} \Leftrightarrow x_{ii} = \textstyle\frac{1}{d_i}$. Adding the equations   $d_id_jx_{ij} + x_{ij} = x_{ji}$ e $d_id_jx_{ji} + x_{ji} = x_{ij}$\,, for $i\neq j$, we have that $d_id_jx_{ij} + x_{ij} + d_id_jx_{ji} + x_{ji} = x_{ji} + x_{ij} \Leftrightarrow x_{ij} = - x_{ji}$\,. Therefore, $d_id_jx_{ij} + x_{ij} = x_{ji} \Leftrightarrow d_id_jx_{ij} + x_{ij} = -x_{ij} \Leftrightarrow (d_id_j + 2)x_{ij} = 0$. Because $d_i, d_j > 0$,  we have that $x_{ij} = 0$, $\forall i \neq j$.
Thus, $X = D^{-1}$. From  Theorem \ref{thm:structural}, $H$ satisfies \ref{property1}. Because $Z = Y^\top$ and $W = W^\top$, $H$ also satisfies $H^\top = H$.
\end{proof}

Based on Theorem \ref{thm:p1sym_characteristic}, we present the following equivalent formulation for \ref{prob:p1_sym}:
\begin{align}
& \min_{H \in \mathbb{R}^{n \times n}}\left\{\|H\|_1 ~:~ AHA + H = A + H^\top\right\}. \tag{$\tilde{P}_{1,\rm{Sym}}^1$}\label{prob:p1_sym2}
\end{align}

\section{A DRS algorithm for \texorpdfstring{\ref{prob:p1_sym}}{A DRS algorithm for P11sym}}\label{sec:DRS}

We developed a Douglas-Rachford Splitting (DRS) algorithm to solve \ref{prob:p1_sym}\,.  DRS is a proximal method that has been widely applied in the literature to this type of problem with promising results (see, for example, \cite{dossal2024optimizationorderalgorithms, Fu_2020}).
It solves convex optimization problems of the form:

\begin{equation}\label{prob:unconstrained}
\text{min } f(H) + g(H),
\end{equation}
where $f, g : \mathbb{R}^{n \times m} \rightarrow \mathbb{R} \cup \{ \infty \}$ are convex, closed, and proper, by  iteratively calculating (see \cite{dossal2024optimizationorderalgorithms}, \cite{BauschkeCombettes2017})
\[
    \begin{array}{ll}
            &H^{k+1/2} := \prox_{\lambda f}(V^k), \\
            &V^{k+1/2} := 2H^{k+1/2} - V^k, \\
            &H^{k+1} := \prox_{\lambda g}(V^{k+1/2}),\\
            &V^{k+1} := V^{k} + H^{k+1} - H^{k+1/2},
    \end{array}
    \]
for $k=0,1,\ldots$, where $\prox_{\lambda h}$ is given by
\begin{equation*}
    \prox_{\lambda h}(V) := \argmin_H\left\{h(H) + \frac{1}{2\lambda}\|H - V\|_{F}^{2}\right\}.
\end{equation*}
We are interested in applying DRS to solve \ref{prob:p1_sym}\,, or, more generally, to solve a problem formulated as
\begin{equation}\label{general}
    \min \{\|H\|_1 : H \in \mathcal{C} \}, 
\end{equation}
where $\mathcal{C}$ is a closed convex set. \eqref{general} 
can be formulated as  \eqref{prob:unconstrained} taking  $f = \|\cdot\|_1$ and $g = \mathcal{I}_{\mathcal{C}}$ (the characteristic function of $\mathcal{C}$). In this case, we have from  \cite{Parikh_Boyd} that the respective proximal operators are given by  $\prox_{\lambda f}(X) = S_{\lambda}(X)$ and $\prox_{\lambda g}(X) = \Pi_{\mathcal{C}}(X)$,
where $S_{\lambda}(X)$ is the element-wise  \emph{soft thresholding} operator, defined by
\begin{equation*}
    S_{\lambda}(x) := \begin{cases}
        x - \lambda, \quad & a > \lambda;\\ 
        0, & |x| \leq \lambda;\\ 
        x + \lambda, & a < - \lambda,
    \end{cases}
\end{equation*}
and $\Pi_{\mathcal{C}}$ is  the projection onto $\mathcal{C}$.

Next, we discuss in more detail the application of DRS to problem \ref{prob:p1_sym}\,, starting with the projection onto the specific set $\mathcal{C}$ for the problem. In many cases, such as ours, where $\mathcal{C}$ is affine, the projection expression has an analytical solution, which we develop below. 

\subsection{\texorpdfstring{Projection onto $\mathcal{C}$}{Projection onto C}}\label{sec:proj_p1sym}

The set of all  symmetric generalized inverses of a given symmetric matrix $A$ can be expressed as the affine space $\mathcal{C} := \{H ~:~ AHA = A,~ H^\top = H \}$. We will now address the calculation of  projection onto this set.

\begin{proposition}
    If $\mathcal{C} := \{H ~:~ AHA = A,~ H^\top = H\}$, then 
    \[
    \Pi_{\mathcal{C}}(V) =\textstyle\frac{1}{2}(V + V^\top) -\textstyle\frac{1}{2}AA^\dagger(V + V^\top)A^\dagger A + A^\dagger.
    \]
\end{proposition}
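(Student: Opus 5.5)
We work in the orthonormal basis given by the spectral decomposition $A=V_0\Sigma V_0^\top$ (writing $V_0$ for the orthogonal factor, to avoid a clash with the argument $V$). Conjugation $H\mapsto V_0^\top H V_0$ is an isometry for $\|\cdot\|_F$ that preserves symmetry. Therefore projecting onto $\mathcal{C}$ amounts to projecting $\tilde V:=V_0^\top V V_0$ onto $\tilde{\mathcal C}:=\{\Gamma:\Gamma^\top=\Gamma,\ X=D^{-1}\}$, where $\Gamma$ is partitioned as in Theorem~\ref{thm:structural}; this description of $\tilde{\mathcal C}$ comes from part (i) of that theorem. Note that $D$ may have negative entries here since $A$ is only symmetric. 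Still, $\Sigma$ is diagonal and invertible on its first block, so $AA^\dagger=A^\dagger A=V_0\,\mathrm{diag}(I_r,0)\,V_0^\top=:P$ is the orthogonal projector onto $\mathcal R(A)$. Alternatively, one can work directly with $P$ and avoid the SVD altogether.

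\textbf{Step 1 (reduce to symmetric inputs).} The symmetric matrices form a subspace with orthogonal complement the skew-symmetric matrices. Hence for any symmetric $H$,
\[
\|H-V\|_F^2=\|H-\tfrac12(V+V^\top)\|_F^2+\|\tfrac12(V-V^\top)\|_F^2 .
\]
So $\Pi_{\mathcal C}(V)=\Pi_{\mathcal C}(S)$ with $S:=\frac12(V+V^\top)$, and it suffices to show $\Pi_{\mathcal C}(S)=S-PSP+A^\dagger$ for symmetric $S$.

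\textbf{Step 2 (feasibility of the candidate).} Let $H^*:=S-PSP+A^\dagger$. It is symmetric because $P$, $S$ and $A^\dagger$ are. For the generalized-inverse condition, $AH^*A=ASA-APSPA+AA^\dagger A$. Since $AP=PA=A$, the first two terms cancel, and the last equals $A$. Hence $H^*\in\mathcal C$.

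\textbf{Step 3 (optimality via orthogonality).} $\mathcal C$ is the affine set $A^\dagger+\mathcal L$ with $\mathcal L:=\{K:AKA=0,\ K^\top=K\}$. Because $A$ is invertible on $\mathcal R(A)$, we have $AKA=0\iff PKP=0$. It then suffices to check that $S-H^*=PSP-A^\dagger$ is Frobenius-orthogonal to $\mathcal L$. Since $A^\dagger=PA^\dagger P$, this residual has the form $PMP$ with $M:=S-A^\dagger$. For $K\in\mathcal L$,
\[
\langle PMP,K\rangle=\operatorname{tr}(PMPK)=\operatorname{tr}(MPKP)=0 .
\]
Combining feasibility from Step 2 with this orthogonality characterizes the projection and finishes the proof.

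The main obstacle is this last step: identifying $\mathcal L$ correctly, in particular the equivalence $AKA=0\iff PKP=0$, and using the cyclic trace together with $P^2=P$. In block form this is transparent: the projection overwrites the $X$-block of the symmetrized $\tilde V$ with $D^{-1}$ and keeps all other blocks, which is exactly $S-PSP+A^\dagger$ written in the original coordinates.
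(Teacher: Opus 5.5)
Your proof is correct, but it takes a different route from the paper's. The paper derives the formula from the Lagrangian. It writes the stationarity condition $H = V - \frac{1}{2}(A\Lambda A + \Gamma - \Gamma^\top)$ and substitutes it into $AHA = A$. It then multiplies by ${A^\dagger}^2$ on both sides and uses that $(AA^\dagger)\otimes(A^\dagger A)$ is an orthogonal projection to select a particular $\Lambda$. Finally it imposes $H = H^\top$, observes that $\Gamma = V$ solves the resulting equation, and substitutes back.

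You instead verify the given candidate through the characterization of the projection onto an affine set: membership in $A^\dagger + \mathcal{L}$ plus orthogonality of the residual to $\mathcal{L}$. Before that, you split off the skew part of $V$, which is exactly what the paper's choice $\Gamma - \Gamma^\top = V - V^\top$ accomplishes implicitly.

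The paper's derivation is constructive: it produces the formula without knowing it in advance and serves as a template for other affine feasible sets. However, its chain contains one-way implications and particular choices of multipliers, so the feasibility of the final $H$ (needed for KKT sufficiency) is left implicit; it does hold, as direct substitution shows.

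Your argument needs the formula up front, which the statement supplies. In exchange it is shorter, avoids Kronecker products and multiplier bookkeeping, and checks feasibility and optimality explicitly. It also shows that only $P = AA^\dagger = A^\dagger A$, $AP = PA = A$ and $PA^\dagger P = A^\dagger$ are used, so the formula holds for indefinite symmetric $A$ as well. And it gives the clean interpretation that the projection symmetrizes $V$ and then replaces the compression $PSP$ by $A^\dagger$.

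One minor point: your preamble applies Theorem~\ref{thm:structural}(i) to an eigendecomposition, which is not an SVD when $D$ has negative entries. Part (i) still holds for any orthogonal factorization with invertible $D$, and Steps 1--3 do not rely on it anyway.
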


\begin{proof}

Let $V \in \mathbb{R}^{n \times m}$ be a given matrix. We want to find a closed-form solution of
\begin{equation}\label{prob:proj_p1sym}
\argmin_H\{\|H-V\|_F^2 ~:~ AHA = A,~ H^\top = H \} .\tag{$\text{Proj}_{1,\rm{Sym}}$}
\end{equation}
The Lagrangian function for \ref{prob:proj_p1sym} is given by 
  \begin{equation}\label{eq:projp1sym_lagrangian}
        \mathcal{L}(H, \Lambda, \Gamma) = \|H-V\|_F^2 + \langle \Lambda, AHA - A \rangle + \langle \Gamma, H - H^\top\rangle.
    \end{equation}
    Because \ref{prob:proj_p1sym} is convex, its primal and dual solutions $(H, \Lambda, \Gamma)$ are such that $\nabla_H \mathcal{L}(H, \Lambda, \Gamma) = 0$, or equivalently, such that
    \begin{align}
            H & = V -\textstyle\frac{1}{2}(A\Lambda A + \Gamma - \Gamma^\top) & \Rightarrow \label{firsteq}\\
            AHA=A & = AVA -\textstyle\frac{1}{2}A(A\Lambda A + \Gamma - \Gamma^\top)A & \Leftrightarrow \nonumber\\
            A^2\Lambda A^2 & = 2(AVA - A) - A(\Gamma - \Gamma^\top)A & \Rightarrow \nonumber\\
            {A^\dagger}^2 A^2\Lambda A^2 {A^\dagger}^2 & = 2{A^\dagger}^2(AVA - A){A^\dagger}^2 - {A^\dagger}^2A(\Gamma - \Gamma^\top)A{A^\dagger}^2 & \Leftrightarrow \nonumber\\
            A^\dagger A\Lambda AA^\dagger & = 2(A^\dagger VA^\dagger - {A^\dagger}^3) - A^\dagger(\Gamma - \Gamma^\top)A^\dagger & \Leftrightarrow \nonumber\\
            ((AA^\dagger) \otimes (A^\dagger A))\vvec(\Lambda) & = \vvec(2(A^\dagger VA^\dagger - {A^\dagger}^3) - A^\dagger(\Gamma - \Gamma^\top)A^\dagger).\nonumber
        \end{align}
        
Because $(AA^\dagger) \otimes (A^\dagger A)$ is an orthogonal projection, $\Lambda = 2(A^\dagger VA^\dagger - {A^\dagger}^3) - A^\dagger(\Gamma - \Gamma^\top)A^\dagger$ is a solution to the last equation above. Substituting this value of $\Lambda$ into \eqref{firsteq}, we have

 \begin{equation*}
        \begin{aligned}
        H    & = V - A(A^\dagger VA^\dagger - {A^\dagger}^3)A + \textstyle\frac{1}{2}AA^\dagger(\Gamma - \Gamma^\top)A^\dagger A - \textstyle\frac{1}{2}(\Gamma - \Gamma^\top) & \Leftrightarrow \\
         H   & = V - AA^\dagger VA^\dagger A + A^\dagger + \textstyle\frac{1}{2}AA^\dagger(\Gamma - \Gamma^\top)A^\dagger A - \textstyle\frac{1}{2}(\Gamma - \Gamma^\top) & \Leftrightarrow \\
            H^\top & = V^\top - AA^\dagger V^\top A^\dagger A + A^\dagger + \textstyle\frac{1}{2}AA^\dagger(\Gamma^\top - \Gamma)A^\dagger A - \textstyle\frac{1}{2}(\Gamma^\top - \Gamma) & \Rightarrow \\
            H - H^\top & = V - V^\top - AA^\dagger(V - V^\top)A^\dagger A + AA^\dagger(\Gamma - \Gamma^\top)A^\dagger A- (\Gamma - \Gamma^\top) & \Rightarrow \\
            0 & = V - V^\top - AA^\dagger(V - V^\top)A^\dagger A + AA^\dagger(\Gamma - \Gamma^\top)A^\dagger A- (\Gamma - \Gamma^\top) & \Leftrightarrow \\
            (\Gamma - \Gamma^\top) - AA^\dagger(\Gamma - \Gamma^\top)A^\dagger A & = V - V^\top - AA^\dagger(V - V^\top)A^\dagger A.
        \end{aligned}
    \end{equation*}

Thus, $\Gamma = V$ solves the last equation above. Now, substituting the values of $\Lambda$ and $\Gamma$ into \eqref{firsteq}, we have:

\begin{equation*}
\begin{aligned}
H & = V - AA^\dagger V A^\dagger A + A^\dagger + \textstyle\frac{1}{2}AA^\dagger(V - V^\top)A^\dagger A - \textstyle\frac{1}{2}(V - V^\top) & \\
& = \textstyle\frac{1}{2}(V + V^\top) - \textstyle\frac{1}{2}AA^\dagger(V + V^\top)A^\dagger A + A^\dagger,
\end{aligned}
\end{equation*}
which is the expression for the projection.
\end{proof}

\subsection{Starting point and stopping criterion}\label{sec:stop_crit}

We consider $V^0:=A^\dagger$ as the starting point, which is a feasible solution for our problem and requires no additional computational cost, as it is already used in the calculation of the projection $\Pi_{\mathcal{C}}(\cdot)$.

For the stopping criterion, we assume that $\epsilon^{\rm abs}$ and $\epsilon^{\rm rel}$ are given positive tolerances and calculate $\tau^{k} := H^{k+1} - H^{k+1/2}=V^{k+1} - V^{k}$ in each iteration $k$. We interrupt the algorithm if $||\tau^{k}||_F \leq \epsilon^{\rm abs} + \epsilon^{\rm rel}||V^1-V^0||_F$\,, $k>0$, which searches for both a small difference between the last two iterations and a small difference relative to the difference between the first two iterations. Note that, from \cite{BauschkeCombettes2017}, we have that if the sequence $V^k$ ($k=0,1,2,\ldots$) converges to a fixed point of $F(V):= V + \Pi_{\mathcal{C}}( 2 S_{\lambda}(V) - V) - S_{\lambda}(V)$, then both sequences $H^{k+1/2}$ ($k=0,1,2,\ldots$) and $H^{k+1}$ ($k=0,1,2,\ldots$) converge to $H^*$, a solution to the problem \eqref{prob:unconstrained}.
Finally, we observe that we always obtain a feasible primal solution from the DRS algorithm, taking as output the matrix $H^{k+1}$ calculated in the last iteration $k$, because $H^{k+1}$ is obtained from a projection onto the feasible set of the problem addressed.


\section{Computation of symmetric sparse generalized inverse}\label{sec:numexp}

We note that \ref{prob:p1_sym} can easily be reformulated as a linear program. Characterizing the extreme points of its feasible region is of particular interest, as such a characterization yields an upper bound on the number of non-zero elements in the basic-feasible solutions of this linear-optimization problem and, consequently, an upper bound on the number of non-zero elements in the sparsest symmetric generalized inverse. We emphasize, however, that an arbitrary (sparse) extreme point may still have a large vector 1-norm, which is undesirable from a numerical perspective. For comparison purposes in our numerical experiments, we next present a result from \cite{XFLPsiam} characterizing these extreme points.

\begin{proposition}\label{prop:extpts_p1sym}{\rm\cite[Proposition 2.1]{XFLPsiam}} Suppose that $A\in\mathbb{R}^{n\times n}$ is symmetric and has rank $r$. Then, the extreme solutions of the linear programming problem associated with \ref{prob:p1_sym} have at most $r^2+r$ non-zero elements. Furthermore, this upper bound is reached for $n-2\geq r\geq 3$.
\end{proposition}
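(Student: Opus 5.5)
The plan has two parts: an upper bound on the support of any extreme point, and a construction showing the bound is attained when $n-2\ge r\ge 3$.

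\textbf{Upper bound.} I would work in the LP reformulation of \ref{prob:p1_sym}. Because of symmetry, the only free variables are the upper-triangular entries $h_{ij}$ with $i\le j$. To linearize the 1-norm, split each variable as $h_{ij}=h^+_{ij}-h^-_{ij}$ with $h^\pm_{ij}\ge 0$. At an extreme point the two parts never are simultaneously positive, and the number of positive variables is at most the rank of the equality system. So the task reduces to bounding the rank of the linear map $\mathcal{S}\to\mathbb{R}^{n\times n}$, $H\mapsto AHA$, where $\mathcal{S}$ is the space of symmetric matrices.

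Write $A=V\Sigma V^\top$ as in Theorem~\ref{thm:structural}. Then $AHA=V\Sigma(V^\top HV)\Sigma V^\top$, and this depends only on the $r\times r$ block $X$ of $\Gamma=V^\top HV$. For symmetric $H$ that block is symmetric. The image is therefore the set of $V_rXV_r^\top$ with $X$ symmetric, which has dimension $r(r+1)/2$. Hence an extreme point has at most $r(r+1)/2$ nonzero variables among the $h_{ij}$, $i\le j$.

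Each off-diagonal variable contributes two nonzero entries of $H$, and each diagonal one contributes one. The number of nonzeros of $H$ is thus at most $2\cdot r(r+1)/2=r^2+r$. This bound is reached only when every basic variable is off-diagonal.

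\textbf{Tightness.} For $n-2\ge r\ge 3$ I would construct an explicit symmetric $A$ of rank $r$ with an extreme symmetric generalized inverse having exactly $r(r+1)/2$ nonzero off-diagonal upper-triangular entries and a zero diagonal. A natural attempt starts from a structured rank-$r$ matrix, for instance built from an $(r+2)\times(r+2)$ block, with the remaining rows and columns padded by zeros. In that setting one chooses a support set $T$ of $r(r+1)/2$ off-diagonal positions such that the columns of the constraint system indexed by $T$ are linearly independent. Solving $AHA=A$ on this support then gives a unique $H$, and one checks that all its entries are nonzero. The hypotheses $r\ge 3$ and $n\ge r+2$ are what leave enough off-diagonal positions, outside a structure forced onto the diagonal, to obtain independence.

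\textbf{Main obstacle.} The hard step is the tightness construction. That means exhibiting a family of matrices and supports for which linear independence and nonvanishing of every entry can be verified for all such $r,n$. I would first try small cases ($r=3$, $n=5$) computationally to guess the pattern, then prove it by induction on $r$, adding a row and column at each step.
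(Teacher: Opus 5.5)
Your upper-bound half is correct. It is the natural rank argument; the paper itself gives no proof and imports the result from \cite{XFLPsiam}. At a vertex, the nonzero upper-triangular variables index linearly independent columns, and $H\mapsto AHA$ restricted to symmetric matrices has an image of dimension $r(r+1)/2$. Two small repairs are needed. First, a symmetric indefinite $A$ has no SVD with $U=V$, so use $A=V_r\Lambda V_r^\top$ with $\Lambda$ diagonal and nonsingular; the image is then $\{V_r\Lambda X\Lambda V_r^\top\}$, of the same dimension. Second, if the LP is the usual one with $F\ge \pm H$ over all $n^2$ entries, note that every vertex has $F=|H|$. Its vertices are therefore exactly the basic points of your split formulation.

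The genuine gap is the second sentence of the statement, an existence claim your proposal does not prove. You describe how you would search (an $(r+2)\times(r+2)$ block, experiments with $r=3$, $n=5$, an induction on $r$). But you exhibit no matrix $A$ and no support $T$, and you verify neither independence nor nonvanishing. Your explanation of the hypotheses is also inaccurate: neither $r\ge 3$ nor $n\ge r+2$ is needed.

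The missing idea is a parametrization that separates the two requirements. Write $A=WSW^\top$ with $W\in\mathbb{R}^{n\times r}$ of full column rank (rows $w_i^\top$) and $S$ symmetric nonsingular. Since $AEA=WS(W^\top EW)SW^\top$ and $X\mapsto WSXSW^\top$ is injective, $H$ satisfies \ref{property1} iff $W^\top HW=S^{-1}$. Likewise, an off-diagonal upper-triangular support $T$ is basic iff the matrices $w_iw_j^\top+w_jw_i^\top$, $(i,j)\in T$, are linearly independent.

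Now take $w_i=e_i$ for $i\le r$, $w_{r+1}=\mathbf{1}\in\mathbb{R}^r$ (all ones), and $w_i=0$ for $i>r+1$. Set $S=\tfrac12 I_r$ and $T=\{(i,j):i<j\le r+1\}$. The generators with $j\le r$ are the off-diagonal unit symmetric matrices, while $e_i\mathbf{1}^\top+\mathbf{1}e_i^\top$ has diagonal $2e_i$. Hence all $r(r+1)/2$ generators are independent.

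Set $h_{ij}=-2$ for $i\neq j\le r$, $h_{i,r+1}=h_{r+1,i}=1$ for $i\le r$, and all other entries (including the diagonal) to zero. This gives $W^\top HW=-2(\mathbf{1}\mathbf{1}^\top-I_r)+2\,\mathbf{1}\mathbf{1}^\top=2I_r=S^{-1}$. So $H$ is a vertex with exactly $r(r-1)+2r=r^2+r$ nonzeros.

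The construction works for all $n\ge r+1$ and $r\ge 1$, hence in particular for $n-2\ge r\ge 3$, with no induction or computer search. This vertex is not 1-norm optimal in general: for $r=3$, $n=4$, the matrix $\mathrm{diag}(2,2,2,0)$ is feasible with smaller 1-norm. It therefore settles the claim as stated for extreme solutions, not for optimal ones.
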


We present numerical results by applying Gurobi v12.0.1 for \ref{prob:p1_sym} and \ref{prob:p1_sym2}\,, and the DRS algorithm for \ref{prob:p1_sym}\,, described in Section~\ref{sec:DRS}. The experiments were implemented in Julia v1.11.3 using VSCodium v1.97.1 and executed on ``zebra'', a 16-core machine running Windows Server 2019 Standard, equipped with Intel Xeon E5-2667 v4 processors at 3.20 GHz (8 cores each) and 128 GB of RAM.
We randomly generated  75 dense matrices $B\in\mathbb{R}^{n\times n}$ of rank $r$ (five for each  $(n, r)$), using the MATLAB's \emph{sprand} function, following the procedure described in \cite{FLPXjogo}. For our test instances, we then calculated the symmetric matrices $A=B^\top B$.
We considered dimensions $n$ ranging from $100$ to $3500$, with $r=0.25n$. A time limit of $7200$ seconds was imposed in all experiments, and the reported statistics correspond to averages over the five instances associated with each configuration. The Gurobi parameters were set as follows:
\emph{Barrier convergence tolerance}: $10^{-5}$,
\emph{Feasibility tolerance}: $10^{-5}$ and
\emph{Optimality tolerance}: $10^{-5}$.
For DRS, we used $\epsilon^{\rm abs}= 10^{-5}$, $\epsilon^{\rm rel} = 10^{-3}$ and $\lambda = 10^{-2}$.

When applying Gurobi to \ref{prob:p1_sym} and \ref{prob:p1_sym2}\,, we were only able to solve instances with $n=100$ within the time limit for our initial test set ($n=100,200,\ldots,3500$). Consequently, we generated additional smaller instances with $n\in\{20,40,60,80\}$, again using five instances per dimension, in order to provide a more meaningful comparison between the two formulations. The average results are reported in Table~\ref{tab:gurobi}.
The first column reports the dimension $n$.

The second column shows the ratio between the 0-norm of the computed solution $H$ and the upper bound $r^2+r$ on the number of nonzero elements in extreme solutions of the linear programming reformulation of \ref{prob:p1_sym} (see Proposition~\ref{prop:extpts_p1sym}). Columns 3 and 4 compare the solution $H$ obtained with Gurobi against the M-P pseudoinverse $A^\dagger$, reporting the ratios $\|H\|_0/\|A^\dagger\|_0$ and $\|H\|_1/\|A^\dagger\|_1$, respectively. Column 5 reports the ratio between rank($H$) and $r$. 
The results highlight the effectiveness of 1-norm minimization in producing sparse symmetric generalized inverses: on average, the solutions contain fewer than 7\% of the nonzero entries of $A^\dagger$. Moreover, the 1-norm of the computed solutions is, on average, reduced by more than 50\% relative to $A^\dagger$. These gains in sparsity, however, come at the expense of increased rank, which more than doubles for most instances, illustrating the trade-off between sparsity and low rank.
We also observe that, as the matrix dimension increases, the number of nonzero entries in the computed solutions approaches the upper bound from Proposition~\ref{prop:extpts_p1sym}. This suggests that even a substantial improvement in the theoretical upper bound would likely reduce the number of nonzero entries by less than 10\%. Finally, although Gurobi performed slightly better on \ref{prob:p1_sym} for these smaller instances, neither formulation exhibited satisfactory scalability for this class of problems.

\begin{table}[H]
    \centering  
    \begin{tabular}{r|rrrr|r|r}      
        \multicolumn{1}{c|}{} & \multicolumn{4}{c|}{} & \multicolumn{1}{c|}{\ref{prob:p1_sym}} & \multicolumn{1}{c}{\ref{prob:p1_sym2}} \\[4pt]
        \hline
        \multicolumn{1}{c|}{$n$} & \multicolumn{1}{c}{$\frac{\|H\|_0}{r^2+r}$} & \multicolumn{1}{c}{$\frac{\|H\|_0}{\|A^\dagger\|_0}$\vphantom{$\frac{\|H\|_0^{H^H}}{\|A^\dagger\|_0^H}$}} & \multicolumn{1}{c}{$\frac{\|H\|_1}{\|A^\dagger\|_1}$} & \multicolumn{1}{c|}{$\frac{\textrm{r}(H)}{r}$} & \multicolumn{1}{c|}{$\substack{\rm{time}\\\rm{(seconds)}}$} & \multicolumn{1}{c}{$\substack{\rm{time}\\\rm{(seconds)}}$} \\[4pt]
        \hline
        20\vphantom{$\frac{H^H}{H^H}$} & 0.687 & 0.054 & 0.500 & 1.400 & 1.83 & 1.41 \\
        40 & 0.833 & 0.060 & 0.444 & 1.940 & 2.13 & 5.89 \\
        60 & 0.869 & 0.062 & 0.456 & 2.053 & 17.99 & 36.27 \\
        80 & 0.858 & 0.059 & 0.448 & 2.390 & 103.79 & 145.49 \\
        100 & 0.913 & 0.064 & 0.447 & 2.456 & 403.15 & 560.85
    \end{tabular}
    \caption{Results for Gurobi for \ref{prob:p1_sym} and \ref{prob:p1_sym2} ($r=0.25n$)}
    \label{tab:gurobi}
\end{table}

Similar statistics to those in Table \ref{tab:gurobi}  are presented in Table \ref{tab:results_drs_p1sym} for the solutions obtained with the DRS algorithm. Comparing the DRS results for $n\!=\!100$ with those  obtained by Gurobi, we observe that both methods produce solutions with essentially the same 1-norms, while the DRS solutions are less sparse, exhibiting 0-norms approximately 28\% greater on average. Nevertheless, the computational effort required by Gurobi renders it uncompetitive when compared with DRS. We also note that all solutions produced by DRS are substantially sparser than $A^\dagger$, once again demonstrating the effectiveness of 1-norm minimization for inducing sparsity in our models. In particular, DRS was able to solve all instances with dimensions up to $n=3500$ within the prescribed time limit, whereas Gurobi was only able to solve the smallest instances with $n=100$. Moreover, for the smaller instances, DRS required computational times that were roughly one order of magnitude smaller than those of Gurobi. Finally, we observe that the 0-norms of the solutions obtained by both Gurobi and DRS are, on average, close to $r^2+r$, the upper bound on the number of nonzero elements in extreme solutions of the associated linear programming formulations. Although DRS does not explicitly exploit extreme-point solutions, the resulting 0-norms remain consistently close to this bound, on average within 30\% above it, and even below it for the larger instances. As in the Gurobi experiments, the sparse solutions produced by DRS generally have ranks greater than $r$, the rank of $A^\dagger$, reinforcing the trade-off between increased sparsity and low rank in our approach.

Overall, these results indicate that the proposed sparsity-inducing model for symmetric generalized inverses is effective, and that the DRS algorithm provides a scalable and computationally efficient approach for solving the problem.

\begin{table}[H]
    \centering  
    \begin{tabular}{r|rrrrr}         
        \multicolumn{1}{c|}{$n$} & \multicolumn{1}{c}{$\frac{\|H\|_0}{r^2+r}$} & \multicolumn{1}{c}{$\frac{\|H\|_0}{\|A^\dagger\|_0}$} & \multicolumn{1}{c}{$\frac{\|H\|_1}{\|A^\dagger\|_1}$} & \multicolumn{1}{c}{$\frac{\textrm{r}(H)}{r}$} & \multicolumn{1}{c}{$\substack{\rm{time}\\\rm{(seconds)}}$} \\[4pt]
        \hline
 100 \vphantom{$\frac{H^0}{H^0}$}& 1.167 & 0.082 & 0.447 & 2.62 & 10.90  \\
        200 & 1.266 & 0.085 & 0.384 & 2.98 & 16.62  \\ 
        300 & 1.184 & 0.081 & 0.383 & 2.91 & 17.31  \\ 
        400 & 1.114 & 0.076 & 0.374 & 3.10 & 60.13  \\ 
        500 & 1.148 & 0.078 & 0.359 & 3.21 & 109.14  \\ 
        1000 & 1.048 & 0.073 & 0.331 & 3.36 & 411.77  \\ 
        1500 & 1.009 & 0.071 & 0.323 & 3.50 & 841.76  \\ 
        2000 & 0.988 & 0.070 & 0.315 & 3.53 & 1489.70  \\ 
        2500 & 0.982 & 0.070 & 0.310 & 3.59 & 2639.73  \\ 
        3000 & 0.967 & 0.070 & 0.303 & 3.60 & 3574.90  \\ 
        3500 & 0.962 & 0.070 & 0.297 & 3.66 & 4780.89  
    \end{tabular}
    \caption{Results for DRS for \ref{prob:p1_sym} ($r=0.25n$)}
    \label{tab:results_drs_p1sym}
\end{table}


\section{An application to least squares}\label{sec:LS}

Given $\lambda \geq 0$, $A\in\mathbb{R}^{m\times n}$, $b\in\mathbb{R}^{m}$, and $L\in\mathbb{R}^{p\times n}$, we consider the following  well-known generalization of the Tikhonov regularization problem
\begin{align}
    \min_{\theta\in\mathbb{R}^n} \left\{
    \|A\theta  -b\|_2^2
    + 
    \lambda \|L\theta \|_2^2
    \right\}.
    \label{probgenL} \tag{G$_L$}
\end{align}
We would like to mention that for $L:=I_n$\,, this is commonly known as 
\emph{ridge regression} or \emph{regression with $\ell_2$-regularization}. 
And for $L:=I_n$\,, but replacing  the squared 2-norm with the 1-norm,
we obtain instead what is commonly known as 
\emph{lasso regression} or \emph{regression with $\ell_1$-regularization}. 

From convexity of the objective function of \ref{probgenL}\,, we have that $\hat\theta$ solves \ref{probgenL} if and only if it satisfies

\begin{equation}\label{kktgl}
    A^\top A\hat\theta + \lambda L^\top L\hat\theta -  A^\top b = 0 \Leftrightarrow (A^\top A  + \lambda L^\top L) \hat\theta =  A^\top b.
\end{equation}

\begin{proposition}\label{thm:solvable}
    The linear system \eqref{kktgl} is solvable for every  $\lambda \geq 0$, $A\in\mathbb{R}^{m\times n}$, $b\in\mathbb{R}^{m}$, and $L\in\mathbb{R}^{p\times n}$.
\end{proposition}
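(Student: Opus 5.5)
We have to show that $A^\top b$ always lies in the range of $M:=A^\top A+\lambda L^\top L$. Since $M$ is symmetric, $\mathcal{R}(M)=\mathcal{K}(M)^\perp$. The plan is therefore to identify $\mathcal{K}(M)$ exactly, and then check that $A^\top b$ is orthogonal to it.

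\emph{Step 1: the kernel of $M$.} Take $x\in\mathcal{K}(M)$. Then
\[
0=x^\top M x=\|Ax\|_2^2+\lambda\|Lx\|_2^2 .
\]
Since $\lambda\ge 0$, both terms are nonnegative, so $Ax=0$. Hence $\mathcal{K}(M)\subseteq\mathcal{K}(A)$. This is the only step that uses $\lambda\ge 0$. It is also the step where care is needed: for $\lambda=0$ we only get $Ax=0$, which is all we need. We do not claim $\mathcal{K}(M)=\mathcal{K}(A)\cap\mathcal{K}(L)$ in that case.

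\emph{Step 2: orthogonality.} For every $x\in\mathcal{K}(M)$, Step 1 gives
\[
x^\top A^\top b=(Ax)^\top b=0 .
\]
So $A^\top b\in\mathcal{K}(M)^\perp=\mathcal{R}(M^\top)=\mathcal{R}(M)$, where the last equality uses the symmetry of $M$. Thus there is $\hat\theta$ with $M\hat\theta=A^\top b$, and \eqref{kktgl} is solvable.

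As an alternative, we could write down an explicit solution. Using the pseudoinverse and the identity $MM^\dagger=P_{\mathcal{R}(M)}$, the vector $\hat\theta:=M^\dagger A^\top b$ solves \eqref{kktgl}, since $MM^\dagger A^\top b=A^\top b$ by Step 2. This choice ties in with the generalized-inverse theme of the section, and by \eqref{property1} any symmetric generalized inverse of $M$ gives a solution in the same way.

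We could also argue directly from \eqref{probgenL}: its objective is a convex quadratic that is bounded below by $0$, so it attains its minimum. By the equivalence already stated before \eqref{kktgl}, that minimizer solves \eqref{kktgl}. The attainment of the minimum is the subtle point in this route, since bounded below does not by itself give attainment; it needs the standard fact that a convex quadratic bounded below on $\mathbb{R}^n$ attains its infimum. For that reason the linear-algebraic argument of Steps 1–2 will be used as the main proof.
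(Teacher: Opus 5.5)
Your proof is correct and rests on the same key step as the paper's. That step is $\mathcal{K}(M)\subseteq\mathcal{K}(A)$ for $M=A^\top A+\lambda L^\top L$, obtained from the quadratic form with $\lambda\ge 0$ and combined with $\mathcal{R}(M)=\mathcal{K}(M)^\perp$ for symmetric $M$. The paper instead first treats $\lambda=0$ separately, projecting $b$ onto $\mathcal{R}(A)$ to get $A^\top b\in\mathcal{R}(A^\top A)$, then uses $\mathcal{R}(A^\top A)\subseteq\mathcal{R}(M)$, with the symmetry step left implicit; your direct use of $A^\top b\perp\mathcal{K}(A)$ is uniform in $\lambda$ and makes that step explicit.
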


\begin{proof}
    First, we consider $\lambda=0$, and we show that $A^\top b \in \mathcal{R}(A^\top A)$.
    Let $\tilde{b}$ be the projection of $b$ onto $\mathcal{R}(A)$. Notice that $b - \tilde{b}$ is orthogonal to every vector in $\mathcal{R}(A)$, in particular, $b - \tilde{b}$ is orthogonal to every column of $A$. So we have

    \begin{equation*}
        A^\top (b - \tilde{b}) = 0.
    \end{equation*}

 Because $\tilde{b} \in \mathcal{R}(A)$, we have that exists $\theta \in \mathbb{R}^n$ such that $\tilde{b} = A\theta $. Thus, $A^\top (b - \tilde{b}) = 0 \Leftrightarrow A^\top A\theta  = A^\top b$. Hence, we have that $A^\top b \in \mathcal{R}(A^\top A)$.

    Now, to extend the proof for $\lambda>0$, we show that $\mathcal{R}(A^\top A) \subset \mathcal{R}(A^\top A + \lambda L^\top L)$. We do it by showing that $\mathcal{K}(A^\top A + \lambda L^\top L) \subset \mathcal{K}(A^\top A)$.
    Let $y \in \mathcal{K}(A^\top A + \lambda L^\top L)$. We have $(A^\top A + \lambda L^\top L)y = 0 \Leftrightarrow A^\top Ay = -\lambda L^\top Ly \Rightarrow y^\top A^\top Ay = -\lambda y^\top L^\top Ly$. Because $y^\top A^\top Ay \geq 0$ and $-\lambda y^\top L^\top Ly \leq 0$, we have $y^\top A^\top Ay = 0 ~\Leftrightarrow~ \langle Ay, Ay \rangle= 0 \Leftrightarrow Ay = 0$. Thus, $y \in \mathcal{K}(A)$. Because $\mathcal{K}(A) \subset \mathcal{K}(A^\top A)$, we have that $y \in \mathcal{K}(A^\top A)$. Hence, $\mathcal{K}(A^\top A + \lambda L^\top L) \subset \mathcal{K}(A^\top A)$, and therefore, $\mathcal{R}(A^\top A) \subset \mathcal{R}(A^\top A + \lambda L^\top L)$.

    Therefore, because $A^\top b \in \mathcal{R}(A^\top A)$ and $\mathcal{R}(A^\top A) \subset \mathcal{R}(A^\top A + \lambda L^\top L)$, we have that $A^\top b \in \mathcal{R}(A^\top A + \lambda L^\top L)$. Which means that the linear system in \eqref{kktgl} is solvable for every $\lambda \geq 0$, $A \in \mathbb{R}^{m \times n}$, $b \in \mathbb{R}^{m}$ , and $L \in \mathbb{R}^{p \times n}$.
\end{proof}

Next, we establish a closed-form solution for \ref{probgenL}\,. 
\begin{theorem}\label{thm:solvegen}
     If $\hat{H}$ is a generalized inverse of $\hat A:= A^\top A  + \lambda L^\top L$, then 
     \begin{equation}\label{solutionLSE1}
     \hat\theta:=\hat{H} A^\top b
     \end{equation}
     solves {\rm\ref{probgenL}} for every $b\in\mathbb{R}^m$.
\end{theorem}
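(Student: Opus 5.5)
The plan is to reduce the claim to the optimality condition \eqref{kktgl} and then use Proposition~\ref{thm:solvable} together with the defining property \ref{property1} of $\hat H$. Because the objective of \ref{probgenL} is convex and differentiable, $\hat\theta$ solves \ref{probgenL} if and only if $\hat A\hat\theta = A^\top b$, where $\hat A := A^\top A + \lambda L^\top L$. So it suffices to show that $\hat A\hat H A^\top b = A^\top b$ for every $b\in\mathbb{R}^m$.

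By Proposition~\ref{thm:solvable}, for each $b$ the system \eqref{kktgl} is solvable. That is, $A^\top b\in\mathcal{R}(\hat A)$, so there exists $\theta_b\in\mathbb{R}^n$ with $\hat A\theta_b = A^\top b$.

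Now use \ref{property1} for $\hat H$, namely $\hat A\hat H\hat A = \hat A$. This gives
\[
\hat A\hat H A^\top b = \hat A\hat H\hat A\theta_b = \hat A\theta_b = A^\top b .
\]
Hence $\hat\theta = \hat H A^\top b$ satisfies \eqref{kktgl}, and by the equivalence above it solves \ref{probgenL}. Since $b$ was arbitrary, the claim holds for every $b\in\mathbb{R}^m$.

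There is no real obstacle here. The only substantive ingredient is that $A^\top b$ lies in the range of $\hat A$, and that is exactly what Proposition~\ref{thm:solvable} provides. The only care needed is to invoke \ref{property1} on a vector that is already written in the form $\hat A\theta_b$, rather than trying to apply it directly to $A^\top b$. Note also that no symmetry or reflexivity of $\hat H$ is required.
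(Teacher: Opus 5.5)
Your proof is correct and is essentially the paper's argument. The paper likewise writes $A^\top b=\hat A y$ via Proposition~\ref{thm:solvable} and uses $\hat A\hat H\hat A=\hat A$ to show that the gradient $2\hat A\hat H A^\top b-2A^\top b$ vanishes, which is the same as your verification of \eqref{kktgl}.
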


\begin{proof}
    Let $f(\theta) = \|A\theta  - b\|_2^2 + \lambda\|L\theta\|_2^2$, then the gradient of problem \ref{probgenL} is
    \begin{equation}\label{gradGL}
        \nabla_{\theta} f(\theta) = 2(A^\top A + \lambda L^\top L)\theta - 2A^\top b.
    \end{equation}

    We show that the gradient of $f$ evaluated at $\hat\theta = \hat{H}A^\top b$ vanishes. Plugging $\hat\theta = \hat{H}A^\top b$ in \eqref{gradGL} we have $\nabla_{\theta} f(\hat{H}A^\top b) = 2(A^\top A + \lambda L^\top L)\hat{H}A^\top b - 2A^\top b = 2\hat{A}\hat{H}A^\top b - 2A^\top b$. By Proposition \ref{thm:solvable}, $A^\top b$ is in the range of $\hat{A}$, thus, there exists $y \in \mathbb{R}^n$ such that $\hat{A}y = A^\top b$. Replacing $A^\top b$ with $\hat{A}y$ in the expression of the gradient we have, $\nabla_{\theta} f(\hat{H}A^\top b) = 2\hat{A}\hat{H}\hat{A}y - 2\hat{A}y \Rightarrow \nabla_{\theta} f(\hat{H}A^\top b) = 2\hat{A}y - 2\hat{A}y \Rightarrow \nabla_{\theta} f(\hat{H}A^\top b) = 0$, where we used that $\hat{H}$ satisfies \ref{property1} with respect to $\hat{A}$.
\end{proof}

We note that when $\lambda=0$, \ref{probgenL} reduces to the least-squares (estimation) problem 
\begin{align}
    \min_{\theta\in\mathbb{R}^n} \left\{
    \|A\theta  -b\|_2^2
    \right\}.
    \label{lse} \tag{LSE}
\end{align}
Therefore, Theorem \ref{thm:solvegen} establishes a closed-form solution for \ref{lse} by setting $\lambda=0$. The following well-known proposition establishes an alternative formula for computing a solution to \ref{lse}.  

\begin{proposition}[see e.g. \cite{RohdeThesis}]\label{prop:p1p3}
If $H$ is an ah-symmetric generalized inverse of $A$, then 
\begin{equation}\label{solutionLSE2}
\hat{\theta}:=Hb
\end{equation}
solves {\rm\ref{lse}} for all $b\in\mathbb{R}^m$.
\end{proposition}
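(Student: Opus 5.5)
To prove that $\hat\theta := Hb$ solves \ref{lse}, I will show that it satisfies the normal equations $A^\top A\hat\theta = A^\top b$. This is exactly \eqref{kktgl} with $\lambda=0$. Because \ref{lse} is convex, this first-order condition is necessary and sufficient for optimality. The whole task therefore reduces to the matrix identity $A^\top A H = A^\top$. Applying that identity to $b$ gives the result for every $b\in\mathbb{R}^m$ at once.

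The identity follows directly from \ref{property1} and \ref{property3}. By \ref{property3}, $AH = (AH)^\top = H^\top A^\top$. Multiplying on the left by $A^\top$ gives
\[
A^\top A H = A^\top H^\top A^\top = (AHA)^\top .
\]
By \ref{property1}, $(AHA)^\top = A^\top$. Hence $A^\top A H b = A^\top b$, and so the gradient $2A^\top A\hat\theta - 2A^\top b$ (namely \eqref{gradGL} with $\lambda=0$) vanishes at $\hat\theta = Hb$.

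A second route would use Theorem~\ref{thm:structural}. With $H = V\Gamma U^\top$, the conditions \ref{property1} and \ref{property3} give $X = D^{-1}$ and $Y = 0$. A direct computation then yields $AH = U\,\mathrm{diag}(I_r,0)\,U^\top$, the orthogonal projector onto $\mathcal{R}(A)$. It follows that $AHb$ is the projection $\tilde b$ of $b$ onto $\mathcal{R}(A)$, which minimizes $\|A\theta - b\|_2$. I expect no real obstacle with either route. The only point needing care is the transposition step, namely combining \ref{property3} and \ref{property1} in the right order, so I would present the first, algebraic route.
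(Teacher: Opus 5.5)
\textbf{Verdict: correct.} The paper gives no proof of this proposition; it only cites \cite{RohdeThesis}. Your algebraic derivation of $A^\top A H = A^\top$ from \ref{property1} and \ref{property3} is a complete, self-contained proof. Each step checks out:
\begin{itemize}
\item \ref{property3} gives $AH = H^\top A^\top$, hence $A^\top A H = A^\top H^\top A^\top = (AHA)^\top$.
\item \ref{property1} turns this into $A^\top$, so the normal equations hold at $\hat\theta = Hb$.
\item Convexity of \ref{lse} makes the normal equations sufficient for optimality.
\end{itemize}

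\textbf{Comparison with Theorem~\ref{thm:solvegen}.} Your argument uses the same template the paper uses there: show that the gradient vanishes at the candidate point. Yours is simpler, though. In Theorem~\ref{thm:solvegen}, $\hat H$ satisfies only \ref{property1} with respect to $\hat A$, so the paper needs Proposition~\ref{thm:solvable} to write $A^\top b = \hat A y$ before the gradient collapses. Here, \ref{property3} gives you the operator identity $A^\top A H = A^\top$ outright, with no range argument.

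\textbf{What your route buys.} The identity $A^\top A H = A^\top$ is in fact equivalent to $H$ satisfying \ref{property1} and \ref{property3}:
\begin{itemize}
\item Multiplying it on the left by $H^\top$ gives $(AH)^\top AH = (AH)^\top$. The left side is symmetric, so \ref{property3} holds.
\item Transposing it gives $(AH)^\top A = A$, which together with \ref{property3} is \ref{property1}.
\end{itemize}
So your argument also yields the converse for free: $Hb$ solves \ref{lse} for every $b$ only if $H$ is an ah-symmetric generalized inverse.

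\textbf{Your second route.} The SVD argument via Theorem~\ref{thm:structural} is also correct. There $AH = U\,\mathrm{diag}(I_r,0)\,U^\top$ is the orthogonal projector onto $\mathcal{R}(A)$, which gives the geometric picture. You are right to present the algebraic route as the main proof.
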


Motivated by the discussion above,  we investigate the  connection between 
   $H$ being an ah-symmetric generalized inverse of $A$ and $\hat{H}$ being a generalized inverse of $A^\top A$. 

\begin{theorem}\label{thm:hhbar}
    There exists a generalized inverse $H$ of $A$ and a generalized inverse $\hat{H}$ of $A^\top A$ such that $H = \hat{H}A^\top $.
\end{theorem}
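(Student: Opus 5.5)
The statement is existential, so it suffices to exhibit one explicit pair. The natural candidate is the M-P pseudoinverse: take $\hat H := (A^\top A)^\dagger$, which is a generalized inverse of $A^\top A$, and set $H := \hat H A^\top = (A^\top A)^\dagger A^\top$. It then remains to check that this $H$ satisfies \ref{property1} with respect to $A$. In fact we expect to show $H = A^\dagger$, which is stronger: it also gives \ref{property2}--\ref{property4} and hence links ah-symmetric reflexive generalized inverses of $A$ with generalized inverses of $A^\top A$, as the discussion after Proposition~\ref{prop:p1p3} anticipates.

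The identity $(A^\top A)^\dagger A^\top = A^\dagger$ follows from the SVD. Write $A = U\Sigma V^\top$ as in Theorem~\ref{thm:structural}. Then $A^\top A = V(\Sigma^\top\Sigma)V^\top$, where $\Sigma^\top\Sigma$ is block diagonal with $D^2$ in the leading $r\times r$ block and zeros elsewhere. Hence $(A^\top A)^\dagger = V(\Sigma^\top\Sigma)^\dagger V^\top$, with $(\Sigma^\top\Sigma)^\dagger$ having $D^{-2}$ in the leading block. Therefore
\[
(A^\top A)^\dagger A^\top = V(\Sigma^\top\Sigma)^\dagger V^\top V\Sigma^\top U^\top = V(\Sigma^\top\Sigma)^\dagger\Sigma^\top U^\top .
\]
The product $(\Sigma^\top\Sigma)^\dagger\Sigma^\top$ has leading block $D^{-2}D = D^{-1}$ and zeros elsewhere, which is exactly $\Sigma^\dagger$. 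So $H = V\Sigma^\dagger U^\top = A^\dagger$, and in particular $AHA = A$.

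The SVD computation is only bookkeeping of block shapes. A basis-free alternative checks \ref{property1} directly. Since $\mathcal{R}(A^\top) = \mathcal{R}(A^\top A)$ (the kernel argument in the proof of Proposition~\ref{thm:solvable} gives $\mathcal{K}(A^\top A)=\mathcal{K}(A)$), we can write $A^\top = A^\top A\,M$ for some $M$. Then $A^\top A H A = A^\top A\hat H A^\top A M^\top\!$-type manipulations yield $A^\top(AHA - A) = 0$, and since also $\mathcal{R}(AHA - A)\subseteq\mathcal{R}(A)$, we get $AHA = A$. This alternative shows that any generalized inverse $\hat H$ of $A^\top A$ works, not only the pseudoinverse. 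For the existential claim, however, the explicit SVD choice suffices, and that is the route I would write up.
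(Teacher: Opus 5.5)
Your argument is correct. It is the paper's SVD computation specialized to a single witness. With $\Delta:=V^\top\hat HV$, your choice $\hat H=(A^\top A)^\dagger$ corresponds to $\Delta_{11}=D^{-2}$ and $\Delta_{12}=\Delta_{21}=\Delta_{22}=0$, which forces $Z=\Delta_{21}D=0$ and hence $H=A^\dagger$.

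The paper keeps $\Delta_{12},\Delta_{21},\Delta_{22}$ arbitrary and computes $\hat HA^\top=V\left[\begin{smallmatrix}D^{-1}&0\\ \Delta_{21}D&0\end{smallmatrix}\right]U^\top$ for every generalized inverse $\hat H$ of $A^\top A$. This takes no extra work and gives the full characterization: $H=\hat HA^\top$ if and only if $Y=W=0$ and $Z=\Delta_{21}D$. Corollary~\ref{cor:ah-ref} is read off from exactly this characterization.

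For that reason, your aside overstates what one example delivers. You suggest that $H=A^\dagger$ satisfying \ref{property2}--\ref{property4} ``links'' ah-symmetric reflexive generalized inverses of $A$ with generalized inverses of $A^\top A$. But the corollary concerns every pair with $H=\hat HA^\top$, and your specific witness says nothing about the others.

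Your basis-free sketch does recover the stronger claim that every $\hat H$ works, and for \ref{property1} alone it needs no $M$. Indeed, $A^\top(AHA)=(A^\top A)\hat H(A^\top A)=A^\top A$, so each column of $AHA-A$ lies in $\mathcal{R}(A)\cap\mathcal{K}(A^\top)=\{0\}$. Pushing that route to the corollary would additionally require showing that $A\hat HA^\top$ is the orthogonal projector onto $\mathcal{R}(A)$ for every generalized inverse $\hat H$ of $A^\top A$. This is where an $M$ with $A^\top=A^\top AM$ becomes useful. The paper's block form makes this immediate via $Y=W=0$ and Theorem~\ref{thm:structural}.
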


\begin{proof}
Consider the full singular-value decomposition\index{full singular-value decomposition} $A =: U \Sigma V^\top$ with 
\[
\Sigma =: \begin{bmatrix}\underset{\scriptscriptstyle r\times r}{D} & \underset{\scriptscriptstyle r\times (n-r)}{0}\\[1.5ex]
\underset{\scriptscriptstyle (m-r)\times r}{0} & \underset{\scriptscriptstyle (m-r)\times (n-r)}{0}\end{bmatrix},
\]
\noindent with $D$ diagonal. 

Let $H \in \mathbb{R}^{n \times m}$ and $\Gamma:=V^\top H U$, where

\[
\Gamma=: \begin{bmatrix}\underset{\scriptscriptstyle r\times r}{X} & \underset{\scriptscriptstyle r\times (m-r)}{Y}\\[1.5ex]
\underset{\scriptscriptstyle (n-r)\times r}{Z} & \underset{\scriptscriptstyle (n-r)\times (m-r)}{W}\end{bmatrix}.
\]
Then $H= V\Gamma U^\top$.

Notice that $A^\top A = V\Sigma^2V^\top $ is the full singular-value decomposition of $A^\top A$.

 Let $\hat{H}  \in \mathbb{R}^{m \times m}$ and  $\Delta:=V^\top \hat{H}V$, where 

    \begin{equation*}
        \Delta := \begin{bmatrix}
            \underset{\scriptscriptstyle r\times r}{\Delta_{11}} & \underset{\scriptscriptstyle r\times n-r}{\Delta_{12}} \\
            \underset{\scriptscriptstyle n-r\times r}{\Delta_{21}} & \underset{\scriptscriptstyle n-r\times n-r}{\Delta_{22}}
        \end{bmatrix}.
    \end{equation*}
    Then  $\hat{H}=V\Delta V^\top $.

    Applying Theorem \ref{thm:structural} to $A$ and $A^\top A$,      
    we have that $H$ is a generalized inverse of $A$ if and only if $X = D^{-1}$, and $\hat{H}$ is a generalized inverse of $A^\top A$ if and only if $\Delta_{11} = D^{-2}$. Then,
    \begin{equation*}
        \begin{aligned}
            H & = V\begin{bmatrix}D^{-1} & Y \\ Z & W\end{bmatrix}U^\top 
        \end{aligned}
    \end{equation*}
and
    \begin{equation*}
        \begin{aligned}
            \hat{H}A^\top  & = V\begin{bmatrix}D^{-2} & \Delta_{12} \\ \Delta_{21} & \Delta_{22}\end{bmatrix}VV^\top \begin{bmatrix}D & 0 \\ 0 & 0\end{bmatrix}U^\top  \\
            & = V\begin{bmatrix}D^{-1} & 0 \\ \Delta_{21}D & 0\end{bmatrix}U^\top\,.
        \end{aligned}
    \end{equation*}
    Thus, we have that  $H = \hat{H}A^\top  \Leftrightarrow Y = W = 0, Z = \Delta_{21}D$. 
\end{proof}

\begin{corollary}\label{cor:ah-ref}
    If the equation $H = \hat{H}A^\top $ holds with $H$ being a generalized inverse of $A$ and $\hat{H}$ being a generalized inverse of $A^\top A$, then $H$ is an ah-symmetric reflexive generalized inverse of $A$.
    \label{cor:geninv_ahrefgeninv_relation}
\end{corollary}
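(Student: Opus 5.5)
The natural route is to reuse the block computation from the proof of Theorem~\ref{thm:hhbar}. We keep the same notation: the full SVD $A = U\Sigma V^\top$, $\Gamma := V^\top H U$ with blocks $X,Y,Z,W$, and $\Delta := V^\top \hat{H} V$ with blocks $\Delta_{ij}$.

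First, by Theorem~\ref{thm:structural}(i) applied to $A$ and to $A^\top A = V\Sigma^2 V^\top$, the hypotheses give $X = D^{-1}$ and $\Delta_{11} = D^{-2}$. The computation in the proof of Theorem~\ref{thm:hhbar} then shows that
\[
\hat H A^\top = V\begin{bmatrix} D^{-1} & 0\\ \Delta_{21}D & 0\end{bmatrix}U^\top .
\]
Comparing with $H = V\Gamma U^\top$ and using the orthogonality of $U$ and $V$, the equation $H = \hat H A^\top$ forces $Y=0$, $W=0$ and $Z = \Delta_{21}D$.

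Next we read off the two required properties from Theorem~\ref{thm:structural}. Since $H$ satisfies \ref{property1}, part (iii) together with $Y=0$ gives \ref{property3}, so $H$ is ah-symmetric. Part (ii) says that \ref{property2} is equivalent to $ZDY = W$. With $Y=0$ the left side is $ZD\cdot 0 = 0$, and $W = 0$ as well, so the equality holds and $H$ is reflexive.

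The only point needing care is the block identification itself, namely checking that $V^\top V = I$ collapses $\hat H A^\top$ to the displayed form with zero right-hand column blocks. That is exactly the calculation already done in Theorem~\ref{thm:hhbar}, so there is no real obstacle and the corollary follows.
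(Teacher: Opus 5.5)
Your proposal is correct and follows the paper's own argument. The paper likewise reads $X = D^{-1}$, $Y = 0$, $W = 0$ off the block computation in the proof of Theorem~\ref{thm:hhbar} and then invokes Theorem~\ref{thm:structural}; you additionally spell out the check $ZDY = 0 = W$ for reflexivity, which the paper leaves implicit.
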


\begin{proof}
    The result follows from the proof of Proposition \ref{thm:hhbar}, by noting that $X = D^{-1}$, $Y = 0$, $W = 0$ and applying Theorem \ref{thm:structural}.
\end{proof}

Finally, from Theorem \ref{thm:solvegen} and Proposition \ref{prop:p1p3}, we have that 

\begin{itemize}
    \item
if $\hat{H}$ is a generalized inverse of $A^\top A$, then $\hat\theta :=\hat{H}A^\top b$ solves \ref{lse}  for all $b$; 
    \item
      if $H$ is an ah-symmetric generalized inverse of $A$, then $\hat\theta:=H b$ solves \ref{lse}  for all $b$. 
\end{itemize}


\section{Numerical experiments for least squares with synthetic matrices}\label{sec:exp-lse}

    We aim at solving \ref{lse}, by applying the two closed-form solutions discussed in Section \ref{sec:LS}. Specifically,  we consider the following two methods for solving \ref{lse}: \begin{itemize}
    \item[($i$)]   $\hat\theta\!=\!\hat{H}A^\top b$, where $\hat{H}$ is a generalized inverse of $A^\top A$;
     \item[($ii$)]  $\hat\theta\!=\!Hb$, where  $H$ is an ah-symmetric reflexive generalized inverse of $A$.
     \end{itemize}
    
    For sparse matrices $A$ we seek sparse $\hat{H}$ and $H$, and compare the two methods that use them to compute a solution to \ref{lse}. 

    Because $\hat{A}:=A^\top A$ is symmetric, we will search for symmetric generalized inverses of $\hat{A}$ in method ($i$), solving the following problem.  
    \begin{align}
        &  
        \min_{\hat{H} \in \mathbb{R}^{n \times n}}\{\|\hat{H}\|_1 : \hat{A}\hat{H}\hat{A}=\hat{A},\, \hat{H}=\hat{H}^\top \}. \tag{$\hat{P}_{1,\mbox{\tiny{sym}}}^1$}\label{probHhat1sym}
\end{align}
    
    Moreover, as $\hat{H}A^\top$ is an ah-symmetric reflexive generalized inverse of $A$ (see Corollary \ref{cor:ah-ref}), to have a better comparison between the two methods, in method ($ii$), we impose that $H$ is not only an ah-symmetric generalized inverse of $A$, sufficient requirement to solve \ref{lse}, but it is also reflexive. We recall that the reflexive property gives the interesting property to the ah-symmetric generalized of having minimum  rank, as discussed in Section \ref{sec:introduction}.
    So, we consider the following problem:
    \begin{align*}
\tag{\mbox{$P^1_{123}$}}
\label{prob:min1norm}
\min \left\{\|H\|_{1}
~:~ \rm{\ref{property1}},\,\rm{\ref{property2}},\,\rm{\ref{property3}}
\right\}.
\end{align*}

From Theorem \ref{thm:structural}, we can deduce the following result. 
\begin{theorem}\label{thm:reduced}
    {\rm\ref{prob:min1norm}}  
can be efficiently reformulated  as 

\begin{equation}\tag{\mbox{$\mathcal{P}^1_{123}$}}\label{prob:barB}
\begin{array}{rrl}
&\min\limits_{\scriptscriptstyle{Z\in\mathbb{R}^{(n-r)\times r}}}& \left\| V \begin{bmatrix}
        D^{-1} & 0\\
        Z & 0\end{bmatrix}U^\top\right\|_1 \\[15pt]
        = &\min\limits_{\scriptscriptstyle{Z\in\mathbb{R}^{(n-r)\times r}}} &\left\| G + V_2ZU_1^\top\right\|_1\\
        [10pt]
        = &\min\limits_{\genfrac{}{}{0pt}{}{F\in\mathbb{R}^{n\times m},}{Z\in\mathbb{R}^{(n-r)\times r}}} &\sum_{i=1}^{n}\sum_{j=1}^m F_{ij}\\[5pt]
        &\mbox{\rm s.t.}& F -  V_2ZU_1^\top\geq G,\\[5pt]
       && F +  V_2ZU_1^\top\geq -G,
\end{array}
\end{equation}
where 
$A =: U \Sigma V^\top$ is the singular-value decomposition of $A$, where $U:= \begin{bmatrix}\underset{\scriptscriptstyle m\times r}{U_1} & \underset{\scriptscriptstyle m\times (m-r)}{U_2}\end{bmatrix} \in \mathbb{R}^{m \times m}$, $V:= \begin{bmatrix}\underset{\scriptscriptstyle n\times r}{V_1} & \underset{\scriptscriptstyle n\times (n-r)}{V_2}\end{bmatrix} \in \mathbb{R}^{n \times n}$ are orthogonal matrices  ($U^\top U = I_m, V^\top V= I_n$), and $\Sigma \in \mathbb{R}^{m \times n}$ with 
\[
\Sigma =: \begin{bmatrix}\underset{\scriptscriptstyle r\times r}{D} & \underset{\scriptscriptstyle r\times (n-r)}{0}\\[1.5ex]
\underset{\scriptscriptstyle (m-r)\times r}{0} & \underset{\scriptscriptstyle (m-r)\times (n-r)}{0}\end{bmatrix},
\]
with $D$ being a diagonal matrix with rank $r$, and where $ G:=V_1D^{-1}U_1^\top$\,.

\end{theorem}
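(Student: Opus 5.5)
Using Theorem~\ref{thm:structural}, we parametrize the feasible set of \ref{prob:min1norm} and then linearize the 1-norm in the usual way. Write $H = V\Gamma U^\top$ with $\Gamma = V^\top H U$, partitioned into blocks $X,Y,Z,W$ as in Theorem~\ref{thm:structural}. By part (i), \ref{property1} holds exactly when $X=D^{-1}$. Given \ref{property1}, part (iii) turns \ref{property3} into $Y=0$, and part (ii) turns \ref{property2} into $W=ZDY$. Since $Y=0$, this gives $W=0$. The feasible set of \ref{prob:min1norm} is therefore exactly
\[
\left\{ V\begin{bmatrix} D^{-1} & 0\\ Z & 0\end{bmatrix}U^\top ~:~ Z\in\mathbb{R}^{(n-r)\times r}\right\}.
\]
The map $Z\mapsto H$ is a bijection onto this set, because $V$ and $U$ are orthogonal and so $\Gamma$ determines $H$ uniquely. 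Minimizing $\|H\|_1$ over the feasible set is thus the same as minimizing over the free variable $Z$, which gives the first expression.

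Next, expand the block product using $V=[V_1\ V_2]$ and $U=[U_1\ U_2]$:
\[
V\begin{bmatrix} D^{-1} & 0\\ Z & 0\end{bmatrix}U^\top = V_1D^{-1}U_1^\top + V_2 Z U_1^\top = G + V_2ZU_1^\top .
\]
This gives the second expression.

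For the third expression, use the standard epigraph linearization of $\|\cdot\|_1$. Introduce $F\in\mathbb{R}^{n\times m}$ with $F\ge |G+V_2ZU_1^\top|$ entrywise, written as the two linear inequalities $F - V_2ZU_1^\top \ge G$ and $F + V_2ZU_1^\top \ge -G$. Then minimize $\sum_{i,j}F_{ij}$. Two facts give equality of optimal values and the correspondence of minimizers:
\begin{itemize}
\item every feasible $(F,Z)$ satisfies $\sum F_{ij}\ge \|G+V_2ZU_1^\top\|_1$;
\item for any fixed $Z$, the choice $F=|G+V_2ZU_1^\top|$ is feasible and attains this value.
\end{itemize}

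The only real obstacle is justifying the word ``efficiently''. Here I would point out what the reformulation removes and what it requires. It eliminates the nonlinear constraint \ref{property2} entirely. It also reduces the matrix variables from $nm$ unknowns with quadratic constraints to the $(n-r)r$ free unknowns of $Z$ plus the auxiliary $F$, with only linear inequality constraints. The one-time cost is a single SVD of $A$, which is already needed to form $G=A^\dagger$. One further check is needed: $Z$ is genuinely unconstrained. This holds because part (ii) imposes $W=ZDY$, which is automatically satisfied once $Y=W=0$.
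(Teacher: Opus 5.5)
Your proposal is correct and is the same derivation the paper intends. The paper gives no written proof beyond saying the result is deduced from Theorem~\ref{thm:structural}; your argument writes that deduction out: $X=D^{-1}$, $Y=0$, hence $W=ZDY=0$; the block product expands to $G+V_2ZU_1^\top$; then the standard epigraph linearization of the 1-norm gives the LP, and your remarks that $G=A^\dagger$ and that the nonconvex constraint \ref{property2} is eliminated are a reasonable reading of ``efficiently.''
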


\medskip 

With the solution $\hat{Z}$ of \ref{prob:barB}\,, we construct the ah-symmetric reflexive generalized inverse of $A$, the matrix $H:=  V\Gamma U^\top$, where (see Theorem \ref{thm:structural})
\[
\Gamma=: \begin{bmatrix}\underset{\scriptscriptstyle r\times r}{D^{-1}} & \underset{\scriptscriptstyle r\times (m-r)}{0}\\[1.5ex]
\underset{\scriptscriptstyle (n-r)\times r}{\hat{Z}} & \underset{\scriptscriptstyle (n-r)\times (m-r)}{0}\end{bmatrix}.
\]

Besides considering the matrix $H$ computed with the solution of \ref{prob:barB}\,
(solved with the DRS algorithm from \cite[Section 3]{machado2025}), 
and the solution $\hat{H}$ of 
\ref{probHhat1sym}\,, 
(solved with Gurobi), 
we also consider the matrix $H$ obtained by the local-search (LS) procedure for ah-symmetric reflexive generalized inverses applied to $A$ from 
\cite{XFLPsiam},
and  the matrix $\hat{H}$ obtained by the local-search
 procedure for the symmetric reflexive generalized inverses applied to $A^\top A$ from 
 \cite[Section 3]{PFLX_ORL}.
 The time limit to solve each instance with each method was set to $1200$ seconds.

 To compare the different methods for solving \ref{lse}, we  randomly generated 50 instances (five for each configuration given by $(m,n,r,d)$), with varied  ranks and  densities, with MATLAB's function \emph{sprand}. In order to obtain sparse low-rank matrices, we used sprand to generate an $m\times r$ matrix, and then added $n-r$ columns to it, given by linear combinations of the first $r$ columns.  
The parameters used were $m=1000$, $n=100,200,\ldots,500$, $r=0.75n$, and density $d=0.1, 0.25$. To compute the 0-norm, we considered the tolerance for non-zero elements as $10^{-5}$. 

The stopping criterion for  the DRS algorithm to solve \ref{prob:barB}  is similar to one presented in Section~\ref{sec:DRS}. The parameters used in Gurobi and the DRS algorithm  are:
\begin{itemize}
    \item Gurobi: barrier convergence tolerance, optimality and feasibility tolerances of $10^{-5}$;
    \item DRS: $\epsilon^{\text{abs}} := 10^{-5}$, $\epsilon^{\text{rel}} := 10^{-3}$, $\lambda := 10^{-2}$;
\end{itemize}

In Table~\ref{tab:lswithA}, we compare the performance of the DRS and LS methods for solving problem~\ref{prob:barB}. With respect to the objective of minimizing the 1-norm, DRS is considerably more effective, producing solutions whose 1-norms range from (80\%) to (95\%) of the 1-norm of the M-P pseudoinverse $A^\dagger$, whereas LS typically returns solutions with approximately twice the 1-norm of $A^\dagger$.

However, an important goal of 1-norm minimization in our framework is to induce sparsity, that is, to reduce the 0-norm of the solutions. From this perspective, LS outperforms DRS, producing solutions with fewer than 40\% of the nonzero entries of $A^\dagger$, while the DRS solutions retain approximately 95\% of $\|A^\dagger\|_0$\,.

Therefore, although the LS solutions are substantially sparser, they come at the cost of significantly larger 1-norms, which may lead to numerical instability or other undesirable effects in practical applications.

\begin{table}[H]
\centering   
        \begin{tabular}{r|r|r|r|r|r|r|r|r|r}
         \multicolumn{2}{c}{}  &\multicolumn{2}{c|}{}&  \multicolumn{3}{c|}{\ref{prob:barB}(DRS)} &  \multicolumn{3}{c}{\ref{prob:barB}(LS)} \\  [4pt]    
        \multicolumn{1}{c|}{$n,d$}  &\multicolumn{1}{c|}{$\|A\|_0$} &  \multicolumn{1}{c|}{$\|A^\dagger\|_0$} &  \multicolumn{1}{c|}{$\|A^\dagger\|_1$} &  \multicolumn{1}{c|}{$\|H\|_0$} &  \multicolumn{1}{c|}{$\|H\|_1$}&  \multicolumn{1}{c|}{Time}&  \multicolumn{1}{c|}{$\|H\|_0$} &  \multicolumn{1}{c|}{$\|H\|_1$}&  \multicolumn{1}{c}{Time}\\[4pt]
        \hline
        100\vphantom{$\frac{H^0}{H^0}$}, ~0.10 & 32137.2 & 99526.2 & 258.34 & 97288.0 & 206.5 & 5.85 & 36051.6 & 435.1 & 9.25 \\
        100, ~0.25 & 41574.0 & 99425.6 & 199.14 & 97496.6 & 185.0 & 5.04 & 36375.8 & 325.8 & 6.43 \\
        200, ~0.10 & 64247.4 & 199007.0 & 555.7 & 191919.0 & 471.1 & 9.74 & 73463.4 & 1131.4 & 6.59 \\
        200, ~0.25 & 83167.6 & 198648.4 & 419.7 & 191382.6 & 395.1 & 11.00 & 73629.0 & 791.9 & 6.63 \\
        300, ~0.10 & 96437.6 & 298446.2 & 891.6 & 283507.8 & 784.5 & 15.01 & 110780.8 & 1959.7 & 6.67 \\
        300, ~0.25 & 124745.0 & 297809.0 & 661.8 & 281664.8 & 629.4 & 16.97 & 111290.0 & 1384.3 & 6.61 \\
        400, ~0.10 & 128555.0 & 397871.4 & 1276.3 & 372008.2 & 1149.4 & 20.69 & 148455.8 & 3124.2 & 6.64 \\
        400, ~0.25 & 166394.4 & 396934.0 & 933.3 & 368272.8 & 889.3 & 24.45 & 148639.4 & 2094.5 & 6.54 \\
        500, ~0.10 & 160660.2 & 497214.6 & 1717.2 & 457951.2 & 1570.3 & 27.27 & 186202.8 & 4441.5 & 6.89 \\
        500, ~0.25 & 207940.4 & 496043.0 & 1239.8 & 451983.4 & 1180.5 & 31.94 & 186405.2 & 3036.3 & 7.02 \\
        \end{tabular}
    \caption{Computing  ah-symmetric reflexive  generalized inverse of $A\!\in\!\mathbb{R}^{1000\times n}$ with rank $r=0.75n$ and density~$d$ 
    }
    \label{tab:lswithA}
\end{table}

In Table~\ref{tab:lswithhatA}, we compare the performance of the methods for solving \ref{probHhat1sym}\,. The results exhibit trends similar to those observed in Table~\ref{tab:lswithA}, although the differences between the methods are more pronounced.

On average, the DRS solutions have 0-norms and 1-norms equal to 53\% and 59\%, respectively, of the corresponding values for $A^\dagger$. In contrast, the LS solutions attain an average 0-norm equal to only 30\% of that of $A^\dagger$, but at the expense of an average 1-norm equal to 220\% of the value for $A^\dagger$.

These results indicate that DRS is able to produce solutions that are simultaneously sparse and numerically well-scaled, achieving reductions in both the 0-norm and the 1-norm. Although LS yields even sparser solutions, the substantially larger 1-norms suggest that these solutions may be considerably more prone to numerical instability and error propagation in practical applications.

\begin{table}[H]
\centering    
        \begin{tabular}{r|r|r|r|r|r|r|r|r}
         \multicolumn{1}{c}{}  &\multicolumn{2}{c|}{} &  \multicolumn{3}{c|}{\ref{probHhat1sym}(DRS)} &  \multicolumn{3}{c}{\ref{probHhat1sym}(LS)} \\ [4pt]     
        \multicolumn{1}{c|}{$n,d$}   &  \multicolumn{1}{c|}{$\|\hat{A}^\dagger\|_0$} &  \multicolumn{1}{c|}{$\|\hat{A}^\dagger\|_1$} &  \multicolumn{1}{c|}{$\|\hat{H}\|_0$} &  \multicolumn{1}{c|}{$\|\hat{H}\|_1$}&  \multicolumn{1}{c|}{Time}&  \multicolumn{1}{c|}{$\|\hat{H}\|_0$} &  \multicolumn{1}{c|}{$\|\hat{H}\|_1$}&  \multicolumn{1}{c}{Time}\\[4pt]
        \hline
            100\vphantom{$\frac{H^0}{H^0}$}, ~0.10 & 9733.6 & 12.03 & 5161.6 & 6.3 & 2.92 & 2794.2 & 71.6 & 6.57 \\
            100, ~0.25 & 9362.8 & 5.7 & 4955.4 & 3.0 & 3.05 & 2815.8 & 39.5 & 6.74 \\
            200, ~0.10 & 38351.2 & 37.8 & 20338.6 & 20.7 & 7.87 & 11236.4 & 451.2 & 11.56 \\
            200, ~0.25 & 36544.4 & 18.0 & 19343.4 & 9.9 & 13.32 & 11388.0 & 275.2 & 8.12 \\
            300, ~0.10 & 85624.8 & 80.1 & 45486.8 & 47.2 & 14.25 & 25473.4 & 1896.9 & 14.53 \\
            300, ~0.25 & 80814.0 & 38.2 & 42819.6 & 22.7 & 22.27 & 25270.6 & 873.7 & 14.70 \\
            400, ~0.10 & 151248.8 & 146.0 & 80557.2 & 91.5 & 29.06 & 45201.6 & 4229.1 & 29.45 \\
            400, ~0.25 & 142344.8 & 69.0 & 75856.8 & 43.5 & 51.94 & 45142.4 & 2107.6 & 29.60 \\
            500, ~0.10 & 235509.6 & 245.8 & 125826.2 & 162.3 & 43.56 & 70441.0 & 10278.5 & 96.17 \\
            500, ~0.25 & 222386.0 & 115.4 & 118795.4 & 76.3 & 79.35 & 70225.4 & 3922.4 & 95.24 \\
        \end{tabular}
    \caption{Computation of symmetric generalized inverse of $\hat{A}:=A^{\protect\top}A\in\mathbb{S}^{n}$ with rank  $r=0.75n$ and density $d$
    }
    \label{tab:lswithhatA}
\end{table}

Comparing the computational times reported in Tables~\ref{tab:lswithA} and \ref{tab:lswithhatA}, we observe that, for the instances in Table~\ref{tab:lswithA}, DRS requires at most three times the computational time of LS, whereas in Table~\ref{tab:lswithhatA} both methods exhibit running times of the same order of magnitude. Since all instances are solved within 100 seconds, these results suggest that, for problems of this size, the choice between DRS and LS may depend primarily on the specific requirements of the application, as both methods provide solutions within reasonable computational times.

The main comparison between Tables~\ref{tab:lswithA} and \ref{tab:lswithhatA} arises in the context of solving least-squares problems by computing either $Hb$ or $\hat{H}A^\top b$. The computational cost of these approaches can be estimated by the number of scalar products required in the corresponding matrix-vector multiplications, namely $\|H\|_0$ and $\|\hat{H}\|_0+\|A^\top\|_0$\,, respectively.

The average values of $\|H\|_0$\,, using either DRS or LS solutions, are approximately 279000 and 111000, respectively. In contrast, the average values of $\|\hat{H}\|_0+\|A^\top\|_0$ are approximately 163000 and 140000, respectively, for DRS and LS. Therefore, from the perspective of minimizing the number of arithmetic operations, the best option is to use an ah-symmetric reflexive generalized inverse of $A$ obtained with LS.

However, this advantage comes with a potential drawback: the LS solutions have, on average, 1-norms approximately twice as large as those of $A^\dagger$, which may lead to numerical instability in practical applications. If numerical robustness is a priority, then the preferable alternative is to use a symmetric generalized inverse of $A^\top A$ computed with DRS. Although this approach requires approximately 46\% more matrix products than the LS solution, it produces solutions with 1-norms smaller than those of $A^\dagger$, thereby mitigating potential numerical difficulties.

Finally, we note an important limitation of this comparison between Tables~\ref{tab:lswithA} and \ref{tab:lswithhatA}: the matrices $A$ considered in the experiments are relatively sparse, with densities of either 10\% or 25\%. 

Consequently, for applications involving even sparser matrices, the approach based on computing $\hat{H}A^\top b$
 may become even more advantageous. On the other hand, for applications with dense matrices, computing $Hb$ is likely to provide better performance.

\section{Case study}\label{sec:case}

In the following numerical experiment, we revisit the comparison between the two approaches for computing least-squares solutions discussed in the previous sections, now using real sparse rank-deficient least-squares instances generated at the New York Power Authority (NYPA). These matrices, originally contributed by Deepak Maragal, are available in the SuiteSparse Matrix Collection~\cite{davis2011university} and arise from practical sparse rectangular systems appearing in power-system analysis and engineering estimation problems.

The objective of this case study is twofold. First, we aim to evaluate whether the conclusions obtained from randomly generated instances remain valid in practical applications involving highly structured sparse matrices. Second, we seek to better understand the trade-offs between sparsity, numerical robustness, and computational scalability when sparse generalized inverses are applied to real least-squares problems.

The results are reported in Tables~\ref{tab:lswithA_cs} and \ref{tab:lswithhatA_cs}. As in Section~\ref{sec:exp-lse}, Table~\ref{tab:lswithA_cs}  presents the results associated with computing ah-symmetric reflexive generalized inverses of $A$, while Table~\ref{tab:lswithhatA_cs} reports the results for symmetric generalized inverses of $\hat{A}:=A^\top A$. A time limit of 24 hours was imposed for every experiment in this section.

We first analyze the results of Table~~\ref{tab:lswithA_cs}. For the instances successfully solved by DRS within the prescribed time limit, the solutions obtained by DRS and LS exhibit comparable sparsity levels, as measured by the 0-norm. In particular, for the instances with dimensions $n=350$ and $n=860$, both methods produce generalized inverses with numbers of nonzero entries of the same order of magnitude. This behavior contrasts with the synthetic instances from Section~\ref{sec:exp-lse}, where LS consistently produced substantially sparser solutions than DRS.

A second important observation concerns the 1-norm of the computed generalized inverses. In the synthetic experiments, LS solutions typically exhibited 1-norms significantly larger than those of the M-P pseudoinverse. In the NYPA instances, however, this phenomenon is less pronounced. In several cases, the increase in the 1-norm associated with LS is relatively moderate, and in one particularly remarkable instance ($n=1034$), the LS solution attains a significantly smaller 1-norm than the M-P pseudoinverse. More precisely, while the M-P pseudoinverse has 1-norm of order $10^8$, the LS solution has 1-norm of order $10^4$. This suggests that the structural properties of the NYPA matrices differ  from those of the randomly generated test instances and may favor sparse generalized inverses with improved numerical scaling.

The computational-time comparison in Table~\ref{tab:lswithA_cs}  highlights the practical advantages of the LS approach for very large matrices. While DRS requires from several minutes to several hours for the larger instances, LS typically solves the same problems in fractions of a second or a few seconds. In particular, LS solves the instances with $n=350$, $860$, and $1034$ in approximately $0.1$, $1.4$, and $2$ seconds, respectively, whereas DRS requires approximately $143$, $1476$, and more than $24$ hours for the corresponding instances. These results indicate that LS is more scalable than DRS on this class of real-world matrices.

The results reported in Table~\ref{tab:lswithhatA_cs} exhibit trends similar to those observed in Table~\ref{tab:lswithA_cs}, but now for the computation of symmetric generalized inverses of $\hat{A}=A^\top A$. Once again, DRS consistently produces solutions with significantly smaller 1-norms than LS, reinforcing the ability of the proposed optimization-based framework to generate numerically well-scaled sparse generalized inverses. However, this advantage comes at a substantial computational cost. While LS remains capable of solving instances up to dimension $n=1034$, DRS only solves instances up to $n=350$ within the time limit.

The differences in computational performance become particularly striking for the largest instances. For example, in the instance with $n=860$, LS computes a sparse generalized inverse in approximately one hour, whereas DRS fails to terminate within 24 hours. Similar behavior occurs for the instance with $n=1034$. These results reinforce the conclusion that local-search procedures currently provide the most scalable alternative for extremely large sparse least-squares problems.

However, we should emphasize that the numerical-quality comparison strongly favors DRS whenever computational resources allow its use. In several instances, the LS solutions exhibit 1-norms between $10\%$ and $200\%$ larger than those obtained by DRS. As large 1-norms may lead to numerical instability and amplification of computational errors, these results suggest that DRS may provide more robust generalized inverses for applications in which numerical stability is critical.

Overall, the NYPA case study reveals a more complex trade-off between sparsity, numerical robustness, and scalability than that observed in the synthetic experiments. The local-search procedures are clearly superior from the perspective of computational scalability and remain highly competitive in terms of sparsity. However, the DRS approach consistently produces solutions with better numerical scaling, as reflected by their significantly smaller 1-norms.

Finally, the substantial gap between the behavior of the synthetic instances and the NYPA matrices suggests that the random test problems used in this work do not fully capture the structural complexity of these real least-squares applications. In particular, the fact that even LS fails to solve some instances within 24 hours indicates that the NYPA matrices are considerably more challenging than our synthetic benchmarks. This observation motivates future work aimed at developing more realistic random-instance generators and at designing scalable algorithms capable of simultaneously promoting sparsity and numerical robustness for generalized inverses arising in real-world applications. 

\begin{table}[H]
\centering 
\setlength{\tabcolsep}{5.5pt}
        \begin{tabular}{r|r|r|r|r|r|r|r|r|r}
         \multicolumn{2}{c}{}  &\multicolumn{2}{c|}{}&  \multicolumn{3}{c|}{\ref{prob:barB}(DRS)} &  \multicolumn{3}{c}{\ref{prob:barB}(LS)} \\ [4pt]     
        \multicolumn{1}{c|}{$n,d$}  &\multicolumn{1}{c|}{$\|A\|_0$} &  \multicolumn{1}{c|}{$\|A^\dagger\|_0$} &  \multicolumn{1}{c|}{$\|A^\dagger\|_1$} &  \multicolumn{1}{c|}{$\|H\|_0$} &  \multicolumn{1}{c|}{$\|H\|_1$}&  \multicolumn{1}{c|}{Time}&  \multicolumn{1}{c|}{$\|H\|_0$} &  \multicolumn{1}{c|}{$\|H\|_1$}&  \multicolumn{1}{c}{Time}\\[4pt]
        \hline
        14\vphantom{$\frac{H^0}{H^0}$},~0.522 & 234 & 448 & 24.4 & 377 & 23.0 & 3.16 & 320 & 27.8 & 0.02 \\
        350,~0.022 & 4357 & 108376 & 2889.8 & 69437 & 2373.9 & 142.50 & 65895 & 3084.4 & 0.11 \\
        860,~0.013 & 18391 & 1279313 & 21154.0 & 713314 & 14301.3 & 1475.94 & 750352 & 39306.4 & 1.41 \\
        1034,~0.013 & 26719 & 1932327 & 109414259.4 & \multicolumn{1}{c|}{*} & \multicolumn{1}{c|}{*} & \multicolumn{1}{c|}{*} & 1172262 & 47696.5 & 2.07 \\
        3320,~0.006 & 93091 & 13160066 & 1404974.4 & \multicolumn{1}{c|}{*} & \multicolumn{1}{c|}{*} & \multicolumn{1}{c|}{*} & 6079289 & 2066437.7 & 127.73 \\
        \end{tabular}
    \caption{Computation of ah-symmetric reflexive generalized inverse for Maragal instances}
    \label{tab:lswithA_cs}
\end{table}

\begin{table}[H]
\centering    
\setlength{\tabcolsep}{5.5pt}
        \begin{tabular}{r|r|r|r|r|r|r|r|r}
         \multicolumn{1}{c}{}  &\multicolumn{2}{c|}{} &  \multicolumn{3}{c|}{\ref{probHhat1sym}(DRS)} &  \multicolumn{3}{c}{\ref{probHhat1sym}(LS)} \\ [4pt]     
        \multicolumn{1}{c|}{$n,d$}   &  \multicolumn{1}{c|}{$\|\hat{A}^\dagger\|_0$} &  \multicolumn{1}{c|}{$\|\hat{A}^\dagger\|_1$} &  \multicolumn{1}{c|}{$\|\hat{H}\|_0$} &  \multicolumn{1}{c|}{$\|\hat{H}\|_1$}&  \multicolumn{1}{c|}{Time}&  \multicolumn{1}{c|}{$\|\hat{H}\|_0$} &  \multicolumn{1}{c|}{$\|\hat{H}\|_1$}&  \multicolumn{1}{c}{Time}\\[4pt]
        \hline
            14\vphantom{$\frac{H^0}{H^0}$},~1.000 & 196 & 15.0 & 156 & 12.5 & 3.39 & 100 & 23.2 & 0.03 \\
            350,~0.147 & 63494 & 56698.2 & 26311 & 38549.6 & 2056.11 & 25049 & 68530.0 & 19.99 \\
            860,~0.551 & 733594 & 519084.9 & \multicolumn{1}{c|}{*} & \multicolumn{1}{c|}{*} & \multicolumn{1}{c|}{*} & 317557 & 1641794.7 & 3348.64 \\
            1034,~0.649 & 1052157 & 421938.6 & \multicolumn{1}{c|}{*} & \multicolumn{1}{c|}{*} & \multicolumn{1}{c|}{*} & 559545 & 3290177.1 & 1600.72 \\
            3320,~0.437 & 10895582 & 13310541498.8 & \multicolumn{1}{c|}{*} & \multicolumn{1}{c|}{*} & \multicolumn{1}{c|}{*} & \multicolumn{1}{c|}{*} & \multicolumn{1}{c|}{*} & \multicolumn{1}{c}{*} \\
        \end{tabular}
    \caption{Computation of symmetric generalized inverse for Maragal instances}
    \label{tab:lswithhatA_cs}
\end{table}


\section*{Acknowledgments}
 The local-search procedures were implemented by Gabriel Ponte, who kindly provided his code for our experiments.

	\bibliographystyle{ieeetr}
    \bibliography{biblio}
    
\end{document}